\numberwithin{equation}{section}
\def\cleardoublepage{\clearpage\if@twoside \ifodd\c@page\else%
    \hbox{}%
    \thispagestyle{empty}%
    \newpage%
    \if@twocolumn\hbox{}\newpage\fi\fi\fi} 
\def \C{\mathbb{C}}
\def \Q{\mathbb{Q}}
\def \Z{\mathbb{Z}}
\def \M{\mathcal{M}}
\def \a{\alpha}
\def \b{\beta}
\DeclareMathOperator*{\lcm}{lcm}
\def \={\coloneqq}
\def \th{\theta}
\def \la{\lambda}
\def \cD{\mathcal{D}}
\def \cM{\mathcal{M}}
\def \cR{\mathcal{R}}
\def \cS{\mathcal{S}}
\def \cT{\mathcal{T}}
\def \cU{\mathcal{U}}
\def \cV{\mathcal{V}}
\theoremstyle{plain}
\newtheorem{thm}{Theorem}[section] 
\newtheorem{cor}{Corollary}[section]
\newtheorem{lemma}{Lemma}[section]
\newtheorem{example}{Example}[section]
\theoremstyle{definition}
\newtheorem{remark}{Remark}[section]
\begin{document}
\title{
{\begin{flushleft}
\vskip 0.45in
{\normalsize\bfseries\textit{}}
\end{flushleft}
\vskip 0.45in
\bfseries\scshape 
A linear independence criterion \\ for certain infinite series \\ with polynomial orders
}}

\thispagestyle{fancy}
\fancyhead{}
\fancyhead[L]{In: Book Title \\ 
Editor: Editor Name, pp. {\thepage-\pageref{lastpage-01}}} 
\fancyhead[R]{ISBN 0000000000  \\
\copyright~2007 Nova Science Publishers, Inc.}
\fancyfoot{}
\renewcommand{\headrulewidth}{0pt}

\author{\bfseries\itshape 
Shinya Kudo
\thanks{Hirosaki University, Graduate School of Science and Technology, 
Hirosaki 036-8561, Japan \newline e-mail:gongtengs48@gmail.com }} 

\date{}
\maketitle

\begin{abstract}
Let $q$ be a Pisot or Salem number. 
Let $f_j(x)$ $(j=1,2,\dots)$ be integer-valued polynomials 
of degree $\ge2$ with positive leading coefficients, and let
$\{a_j (n)\}_{n\ge1}$ $(j=1,2,\dots)$ be sequences of algebraic integers 
in the field $\Q(q)$ with suitable growth conditions.
In this paper, we investigate linear independence over $\Q(q)$ of the numbers
\begin{equation*}
1,\qquad \sum_{n=1}^{\infty} \frac{a_j (n)}{q^{f_j (n)}} \quad (j=1,2,\dots).
\end{equation*}
In particular, when $a_j(n)$ $(j=1,2,\dots)$ are polynomials of $n$,
we give a linear independence criterion for the above numbers.
\end{abstract}
 
\noindent \textbf{Keywords:} Linear independence, Jacobi theta function, Pisot number, Salem number\\
\noindent \textbf{Mathematics Subject Classification (2020).} Primary 11J72.


\pagestyle{fancy}  
\fancyhead{}
\fancyhead[EC]{Shinya Kudo}
\fancyhead[EL,OR]{\thepage}
\fancyhead[OC]{
A linear independence criterion for certain infinite series with polynomial orders
}
\fancyfoot{}
\renewcommand\headrulewidth{0.5pt}

\section{Introduction and main results}\label{sec1}   

In 1957, Erd\H{o}s \cite{Erdos} proved that if $\ell \ge 1$ is an integer and $1<n_1<n_2<\cdots$ are integers with $\limsup_{k \to \infty} n_k/k^\ell = \infty$, 
then for any integer $b\ge2$, the number $\sum_{k=1}^{\infty} 1/b^{n_k}$ 
is transcendental or an algebraic number with degree at least $\ell+1$.
For example, we see that if $f(x)$ is an integer-valued polynomial of degree $d \ge2$ 
with $1<f(1)<f(2)<\cdots$, then for any integer $b\ge2$, the number 
$\sum_{n=1}^{\infty} 1/b^{f(n)}$ is transcendental or an algebraic number with degree at least $d$.

On the other hand, Kumar \cite{Kumar} obtained linear independence result
for certain infinite series with monomial orders. More precisely,
he proved that if $k\ge2$ and $1\le a_1<a_2<\dots<a_m$ are integers with 
$\sqrt[k]{a_i/a_j} \notin \Q$ $(i \neq j)$, then for any integer $b\ge2$, the numbers
\begin{equation*}
1, \quad \sum_{n=1}^{\infty} \frac{1}{b^{a_1 n^k}}, \quad \sum_{n=1}^{\infty} \frac{1}{b^{a_2 n^k}},\dots,
\sum_{n=1}^{\infty} \frac{1}{b^{a_m n^k}} 
\end{equation*}
are linearly independent over $\Q$. Recently, Murakami and Tachiya \cite{Murakami, Murakami2} 
gave a generalization of Kumar's result.
In particular, they showed in \cite[Corollary 1.3]{Murakami} that
for any integer $b \ge 2$, the set of the numbers
\begin{equation}
1, \qquad \sum_{n=1}^{\infty} \frac{1}{b^{i n^j}} \quad (i=1,2,\dots, \ j=2,3,\dots) \label{0.mono}
\end{equation}
is linearly independent over $\Q$.

In this paper, we study linear independence of certain infinite series with polynomial orders, 
and as a corollary, get a generalization of the linear independence result
for the numbers \eqref{0.mono}.
Moreover, we give a linear independence criterion under specific conditions.

Before stating our results, we need some preparation.
We write $f(x) \equiv g(x)$ if the polynomials $f(x)$ and $g(x)$ are identically equal,
and $f(x) \not\equiv g(x)$ otherwise.
Recall that the polynomial $f(x)$ is called an integer-valued polynomial 
if the value $f(n)$ is an integer for every positive integer $n$.
Note that all coefficients of $f(x)$ are rational numbers if $f(x)$ is an integer-valued polynomial.
In addition, a Pisot (resp. Salem) number are defined as a real algebraic integer greater than one,
all of whose Galois conjugates other than itself have absolute value less than one 
(resp. less than or equal to one, and at least one conjugate has absolute value exactly one).
Clearly, all of integers greater than one are Pisot numbers.
Our results are the following.


\begin{thm}\label{thm1}
Let $q$ be a Pisot or Salem number. 
Let $\ell \ge 1$ be an integer and $f_j(x)$ $(j=1,2,\dots,\ell)$ be integer-valued polynomials 
of degree $\ge2$ with positive leading coefficients.
Let $\{a_j (n)\}_{n\ge1}$ $(j=1,2,\dots,\ell)$ be sequences of algebraic integers in the field $\Q(q)$
with $\log(1+|a_j (n)^\sigma|)=o(n)$ for every embedding $\sigma \colon \Q(q) \to \C$.
Suppose that for any non-empty subset $\cS$ of $\{1,2,\dots,\ell \}$,
there exist integers $i \in \cS$ and $A$ satisfying the following two conditions:
\begin{enumerate}
\setlength{\parskip}{0pt}
\setlength{\itemsep}{0pt}
\item \label{i}
$f_i (x+A) \not\equiv f_j (Bx+C)+D$ 
for any integers $j \in \cS \setminus \{i\}$, $C$, $D$ and any positive rational number $B$.
\item \label{ii}
There exists a positive integer $E$ such that $\displaystyle \liminf_{n \to \infty} |a_i (En+A)| \neq 0$.
\end{enumerate}
Then, the numbers
\begin{equation}
1,\qquad \sum_{n=1}^{\infty} \frac{a_j (n)}{q^{f_j (n)}} \quad (j=1,2,\dots,\ell) \label{1.1.ind}
\end{equation}
are linearly independent over $\Q(q)$.
\end{thm}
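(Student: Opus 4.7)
The plan is to argue by contradiction, combining a Pisot/Salem norm-vanishing step with a polynomial Diophantine analysis of ``level collisions,'' organized as an induction on $|\cS|$. Suppose a nontrivial $\Q(q)$-linear relation $c_0 + \sum_{j=1}^{\ell} c_j S_j = 0$ holds with $S_j := \sum_{n\ge 1} a_j(n)/q^{f_j(n)}$; clearing denominators in $\Z[q]$, one may assume every $c_j$ is an algebraic integer of $\Q(q)$. Set $\cS := \{\,j : c_j \ne 0\,\}$; the hypothesis applied to this non-empty $\cS$ yields $i \in \cS$, $A \in \Z$, $E \in \Z_{>0}$ satisfying (i)-(ii), and (ii) furnishes an infinite $K \subseteq \N$ on which $|a_i(Ek+A)| \ge \delta > 0$.

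For $k \in K$, set $N_k := f_i(Ek+A)$ and multiply the relation by $q^{N_k}$ to obtain $\alpha_k + \beta_k = 0$, with
$$\alpha_k := c_0\, q^{N_k} + \sum_{j \in \cS} c_j \sum_{\substack{n \ge 1 \\ f_j(n) \le N_k}} a_j(n)\, q^{N_k - f_j(n)}$$
an algebraic integer of $\Q(q)$ and $\beta_k$ the tail over $f_j(n) > N_k$. Since $\deg f_j \ge 2$ and $\log(1+|a_j(n)|)=o(n)$, one has $|\beta_k| \le q^{-c\,k^{\deg f_i - 1}}$ for some $c > 0$. For each non-identity embedding $\sigma\colon \Q(q) \to \C$, the Pisot/Salem property $|q^\sigma| \le 1$, together with $\log(1+|a_j(n)^\sigma|) = o(n)$ and the polynomial-in-$k$ number of summands, yields $|\alpha_k^\sigma| \le e^{o(k)}$. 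Hence $|N_{\Q(q)/\Q}(\alpha_k)| < 1$ for large $k$; since $N_{\Q(q)/\Q}(\alpha_k) \in \Z$, the algebraic integer $\alpha_k$ vanishes for all sufficiently large $k \in K$.

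Dividing $\alpha_k = 0$ by $q^{N_k}$ and subtracting from the original relation gives $\sum_{j\in\cS} c_j \sum_{f_j(n)>N_k} a_j(n)/q^{f_j(n)} = 0$. Differencing at consecutive $k < k'$ in $K$ and multiplying by $q^{N_{k'}}$ produces the finite identity
$$\sum_{j \in \cS} c_j \sum_{\substack{n \ge 1 \\ N_k < f_j(n) \le N_{k'}}} a_j(n)\, q^{N_{k'} - f_j(n)} = 0,$$
in which the diagonal contribution at $(j,n) = (i,\,Ek'+A)$ is the nonzero constant $c_i a_i(Ek'+A)$, while every other term either carries a positive power of $q$ (when $f_j(n) < N_{k'}$) or corresponds to a ``level collision'' $f_j(n) = N_{k'} = f_i(Ek'+A)$ with $j \ne i$.

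The hard part will be ruling out cancellation of the diagonal. Infinitely many level collisions would force the polynomial Diophantine equation $f_j(n) = f_i(Ek'+A)$ to have infinitely many integer solutions, which by a Bilu--Tichy-type result implies an affine polynomial identity $f_i(x+A) \equiv f_j(Bx+C) + D$ with $B \in \Q_{>0}$ and $C, D \in \Z$---precisely what condition~(i) forbids. Once collisions are excluded, the remaining higher-power-of-$q$ terms are handled either by iterating the norm-vanishing step at refined levels $N_k + r$ ($r = 1, 2, \dots$), or by observing that their simultaneous vanishing forces some $c_j$ ($j \ne i$) to be zero, strictly shrinking $\cS$; applying the hypothesis to the smaller subset closes the induction. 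Making both steps---the Bilu--Tichy-type reduction from (i) and the inductive elimination of $|\cS|$---fully rigorous is where the principal technical work lies.
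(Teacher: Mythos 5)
There is a genuine gap, and it sits exactly where the paper's main technical effort goes. Your argument controls only \emph{exact} level collisions $f_j(n)=f_i(Ek'+A)$, and your tail bound $|\beta_k|\le q^{-ck^{\deg f_i-1}}$ is false in general: for $j\neq i$ nothing prevents some $f_j(n)$ from equalling $N_k+1$ (or $N_k-1$, which equally contaminates the head/norm estimate), and since $|a_j(n)|$ is only assumed to satisfy $\log(1+|a_j(n)|)=o(n)$, such a near-collision contributes a term of size roughly $q^{h(n)-1}$ with $h(n)\to\infty$, so the tail need not be small and $\alpha_k$ need not dominate. What is actually required is a quantitative gap: infinitely many $m\equiv A\pmod E$ with $|f_1(m)-f_j(k)|>G(m)$ for all $(j,k)\neq(1,m)$, where $G(m)\to\infty$ fast enough (the paper takes $G(m)\ge m^{3/4}$, calibrated against the $e^{o(n)}$ coefficient growth and against the conjugate contributions in the norm). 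Establishing this from condition \eqref{i} is the content of the paper's Lemma \ref{lem1}, a five-step argument showing that persistent \emph{near}-collisions along the progression force, via counting and the mean value theorem, an exact affine identity $f_1(x+A)\equiv f_{j_0}(Bx+C)+D$. A Bilu--Tichy-type statement about $f(x)=g(y)$ having infinitely many integer solutions does not substitute for this: it addresses neither approximate equalities within a growing window nor the restriction of $x$ to an arithmetic progression, and its conclusion (a standard-pair decomposition) is not the affine identity that \eqref{i} forbids.

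Two further points. First, the hypothesis must be applied not to $\cR=\{j: c_j\neq 0\}$ but to the subset $\cS\subseteq\cR$ of indices of \emph{minimal} degree; for $j\in\cR\setminus\cS$ condition \eqref{i} is automatic by degree comparison, whereas if the chosen $i$ did not have minimal degree, the cut level $N_k=f_i(Ek+A)$ would be reached by lower-degree $f_j$ only at arguments $n\gg k$, and both the number of head terms and the bound $|a_j(n)^\sigma|\le e^{o(n)}$ would then exceed $e^{o(k)}$, breaking your conjugate estimate $|\alpha_k^\sigma|\le e^{o(k)}$. Second, the concluding step (``simultaneous vanishing forces some $c_j=0$, shrinking $\cS$'') is not worked out and is not obviously available: the differenced identity mixes terms at many distinct powers of $q$, and since $q$ is an algebraic integer rather than a rational integer, separating them requires another norm argument, not a coefficient comparison. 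In the paper no induction on $|\cS|$ is needed: once the gap lemma is available, a single norm computation on $P_m(q)$ finishes the proof.
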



We give the following corollary of Theorem \ref{thm1}.

\begin{cor}\label{cor1}
Let $q$ be a Pisot or Salem number.
Let $f_j(x)$ $(j = 1,2,\dots,\ell)$ be integer-valued polynomials 
of degree $\ge2$ with positive leading coefficients
such that the condition \eqref{i} in Theorem \ref{thm1} holds.
Then, for any sequences $\{a_j (n)\}_{n\ge1}$ $(j=1,2,\dots,\ell)$ of non-zero integers with 
\mbox{$\log(1+|a_j (n)|)=o(n)$}, the numbers
\begin{equation*}
1,\qquad \sum_{n=1}^{\infty} \frac{a_j (n)}{q^{f_j (n)}} \quad (j=1,2,\dots,\ell)
\end{equation*}
are linearly independent over $\Q(q)$.
In particular, so are the numbers
\begin{equation*}
1,\qquad \sum_{n=1}^{\infty} \frac{1}{q^{f_j (n)}} \quad (j=1,2,\dots,\ell).
\end{equation*}
\end{cor}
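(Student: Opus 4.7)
The plan is to derive the corollary directly from Theorem \ref{thm1} by verifying its hypotheses in the present setting. For the sequence-level assumptions, every non-zero integer lies in $\Z \subset \Q(q)$ and is an algebraic integer, and since $a_j(n) \in \Z$ its image under any embedding $\sigma\colon \Q(q) \to \C$ is $a_j(n)$ itself. Thus the assumed estimate $\log(1+|a_j(n)|)=o(n)$ coincides with the growth condition required by Theorem \ref{thm1}.

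The heart of the argument is to verify conditions \eqref{i} and \eqref{ii} of Theorem \ref{thm1} for every non-empty subset $\cS \subseteq \{1,2,\dots,\ell\}$. Condition \eqref{i} is supplied by the hypothesis of the corollary, yielding some $i \in \cS$ together with an integer $A$. For the same pair $(i,A)$, I would verify \eqref{ii} with $E=1$: since each $a_i(n+A)$ is by assumption a non-zero integer whenever $n+A \ge 1$, we have $|a_i(n+A)| \ge 1$, and hence $\liminf_{n \to \infty} |a_i(n+A)| \ge 1 \ne 0$. An application of Theorem \ref{thm1} then yields the linear independence of $1$ and $\sum_{n=1}^{\infty} a_j(n)/q^{f_j(n)}$ $(j=1,2,\dots,\ell)$ over $\Q(q)$.

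The ``in particular'' assertion is the special case $a_j \equiv 1$, where $\log(1+|1|)=\log 2 = o(n)$ and non-vanishing is immediate, so no additional work is required.

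There is essentially no obstacle here: the substantive content of the corollary lives entirely in Theorem \ref{thm1}, and the hypothesis that the $a_j(n)$ are non-zero integers has the sole function of making the uniform lower bound $|a_i(n)| \ge 1$ available, thereby rendering condition \eqref{ii} automatic for whichever $i$ and $A$ are produced by condition \eqref{i}.
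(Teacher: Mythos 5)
Your proposal is correct and matches the intended deduction: the paper states Corollary \ref{cor1} as an immediate consequence of Theorem \ref{thm1}, and the only content to check is that rational integers are fixed by every embedding (so the growth hypothesis reduces to the stated one) and that non-vanishing integrality gives $|a_i(En+A)|\ge 1$, making condition \eqref{ii} automatic with $E=1$. Your verification of both points is exactly right.
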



Let us observe the condition \eqref{i} in Theorem \ref{thm1}.
Define $f(x) \= x^2$, $g(x) \= (x+1)^2$ and $h(x) \= (2x)^2$. 
Clearly, we have
\begin{align*}
f(x+A) &\equiv g(x+A-1), \\
g(x+A) &\equiv f(x+A+1)
\end{align*}
for any integer $A$, and so the polynomials $f$ and $g$ do not satisfy the condition \eqref{i}.
On the other hand, for the polynomials $f$ and $h$, we have $f(x) \equiv h(x/2)$, while
\begin{equation*}
f(x+1) \not\equiv h(Bx+C)+D
\end{equation*}
for any positive rational number $B$ and any integers $C$, $D$.
Hence, the polynomials $f$ and $h$ satisfy the condition \eqref{i}.


Now, we apply Theorem \ref{thm1} to derive a linear independence result for the values of
the Jacobi theta constants defined in $|z|<1$ by
\begin{equation}
\th_2 (z) \= 2z^{\frac{1}{4}} \sum_{n=0}^\infty z^{n(n+1)}, \quad 
\th_3 (z) \= 1+2 \sum_{n=1}^\infty z^{n^2}, \quad
\th_4 (z) \= 1+2 \sum_{n=1}^\infty (-1)^n z^{n^2}. \label{1.1.Jacobi}
\end{equation} 
Let $f_1(x) \= x(x+1)$, $f_2(x) \= (2x)^2$ and $f_3(x) \= (2x-1)^2$.
Then, we immediately see that the polynomials $f_1(x)$, $f_2(x)$ and $f_3(x)$ 
satisfy the condition \eqref{i} in Theorem \ref{thm1}.
Let $q$ be a Pisot or Salem number, and $\ell \ge 0$ be an integer.
We define 
\begin{equation}
\a_{i,j} \= \sum_{n=1}^{\infty} \frac{n^i}{q^{f_j(n)}} \quad (i=0,1,\dots,\ell, \ j=1,2,3). \label{1.1.a_i,j}
\end{equation}
For any algebraic integers $A_{i,j}$ $ (i=0,1,\dots,\ell, \ j=1,2,3)$ in $\Q(q)$ not all zero,
we consider the linear combination over $\Q(q)$ of the numbers \eqref{1.1.a_i,j} given by
\begin{equation}
\sum_{j=1}^{3} \sum_{i=0}^{\ell} A_{i,j} \a_{i,j} 
= \sum_{j \in \cT} \sum_{n=1}^{\infty} \frac{a_j(n)}{q^{f_j(n)}},
\label{1.1.Aa_i,j}
\end{equation}
where 
\begin{equation*}
\cT \= \left\{ j \in \{1,2,3\} \mid \text{there exists an integer } i \ (0 \le i \le \ell) 
\text{ such that } A_{i,j} \neq 0 \right\} \ (\neq \emptyset)
\end{equation*}
and $a_j(n) \= \sum_{i=0}^{\ell} A_{i,j} n^i$ $(n \ge 1)$.
Clearly, the sequences $\{a_j (n)\}_{n\ge1}$ $(j \in \cT)$ satisfy 
the conditions in Theorem \ref{thm1}.
Thus, by Theorem \ref{thm1}, the numbers \eqref{1.1.Aa_i,j} does not belong to $\Q(q)$, namely,
the numbers $1, \ \a_{i,j}$ $(i=0,1,\dots,\ell, \ j=1,2,3)$ are linearly independent over $\Q(q)$.
Also, for any algebraic integers $u$ and $v$ in $\Q(q)$, we have
\begin{equation*}
u \sum_{n=1}^{\infty} \frac{n^k}{q^{n^2}} + v \sum_{n=1}^{\infty} \frac{(-1)^n n^k}{q^{n^2}}
= (u+v) \sum_{n=1}^{\infty} \frac{(2n)^k}{q^{(2n)^2}} 
+ (u-v) \sum_{n=1}^{\infty} \frac{(2n-1)^k}{q^{(2n-1)^2}}
\quad (k=0,1,\dots),
\end{equation*}
and hence we get the following example of Theorem \ref{thm1}.

\begin{example}\label{exm1}
Let $q$ be a Pisot or Salem number. Then, for any integer $\ell \ge 0$, the numbers
\begin{equation*}
1,\qquad \sum_{n=1}^{\infty} \frac{n^i}{q^{n(n+1)}}, 
\quad \sum_{n=1}^{\infty} \frac{n^j}{q^{n^2}}, 
\quad \sum_{n=1}^{\infty} \frac{(-1)^n n^k}{q^{n^2}} \quad (i,j,k=0,1,\dots,\ell)
\end{equation*}
are linearly independent over $\Q(q)$.
Thus, so are the values
\begin{align*}
1,\qquad \left. \frac{d^i}{dz^i} \left( z^{-\frac{1}{4}} \th_2 (z) \right) \right|_{z=1/q}, 
\quad \left. \frac{d^j}{dz^j} \th_3 (z) \right|_{z=1/q}, 
\quad \left. \frac{d^k}{dz^k} \th_4 (z) \right|_{z=1/q} \quad (i,j,k=0,1,\dots,\ell).
\end{align*}
\end{example}


\begin{remark}\label{rem1}
Bertrand \hfill \cite{Bertrand}, \hfill and \hfill independently \hfill 
Duverney, \hfill Ke. \hfill Nishioka, \hfill Ku. \hfill Nishioka \hfill and 
\\ Shiokawa \cite{Shiokawa} proved that 
for any algebraic number $\a$ $(0 \! < \! |\a| \! < \! 1)$ and any integer $m$ $(2 \le m \le 4)$, \\
the three values $\th_m(\a)$, $\cD \th_m(\a)$ and $\cD^2 \th_m(\a)$ 
are algebraically independent over $\Q$, where $\cD \= z (d/dz)$ is a differential operator,
by applying results of Nesterenko \cite{Nesterenko}.
On the other hand, for any above numbers $\a$ and $m$,
the four values $\th_m(\a)$, $\cD \th_m(\a)$, $\cD^2 \th_m(\a)$ and $\cD^3 \th_m(\a)$
are algebraically dependent over $\Q$
since $\th_m$ has an algebraic differential equation of the third order over $\Q$
(cf. \cite{Jacobi}).
In addition, Bertrand showed that for any algebraic number $\a$ $(0<|\a|<1)$,
any two values among the three values
$\th_2 (\a)$, $\th_3 (\a)$ and $\th_4 (\a)$ are algebraically independent over $\Q$.
Conversely, these three values are algebraically dependent over $\Q$
by the Jacobi identity $\th_3^4(z) = \th_2^4(z) + \th_4^4(z)$ (cf. \cite{Lawden}).
By the way, Elsner and Kumar \cite{Elsner} recently derived linear independence results
for functions and values related to $\th_3$.
\end{remark}


For any integer $\ell \ge 2$, we find that the polynomials 
\begin{equation*}
f_{i,j}(x) \= ix^j  \quad (i=1,2,\dots,\ell, \ j=2,3,\dots,\ell)
\end{equation*}
satisfy the condition \eqref{i} in Theorem \ref{thm1} 
(see Lemma \ref{lem2} in Section \ref{sec5}).
By Corollary \ref{cor1}, we get the following example.

\newpage

\begin{example}\label{exm2}
Let $q$ be a Pisot or Salem number.
Then, for any integer $\ell \ge 2$ and
any sequences $\{a_{i,j} (n)\}_{n\ge1}$ $(i=1,2,\dots,\ell, \ j=2,3,\dots,\ell)$ of non-zero integers with 
$\log(1+|a_{i,j} (n)|)=o(n)$, the numbers
\begin{equation*}
1,\qquad \sum_{n=1}^{\infty} \frac{a_{i,j} (n)}{q^{in^j}} \quad (i=1,2,\dots,\ell, \ j=2,3,\dots,\ell)
\end{equation*}
are linearly independent over $\Q(q)$.
\end{example}

\noindent
This generalizes a result of Karmakar, Kumar and Thangadurai \cite[Theorem 1.2]{Karmakar}. \\


Next, we give a linear independence criterion (Theorem \ref{thm2}) for the numbers \eqref{1.1.ind}
when $a_j(n)$ $(j=1,2,\dots,\ell)$ are polynomials of $n$.
Before stating Theorem \ref{thm2}, we need some preparation.

Let $q$ be a Pisot or Salem number.
The sequences $\{ b_j(n) \}_{n \ge 0}$ $(j=1,2,\dots,\ell)$ of complex numbers
are called linearly dependent over $\Q(q)$ 
if there exist algebraic numbers $k_j$ $(j=1,2,\dots,\ell)$ in $\Q(q)$ not all zero such that
$\sum_{j=1}^{\ell} k_j \cdot b_j(n) = 0$ for every integer $n \ge 0$.
Otherwise, the sequences $\{ b_j(n) \}_{n \ge 0}$ $(j=1,2,\dots,\ell)$ are called 
linearly independent over $\Q(q)$.
In addition, we define an equivalence relation $\sim$
on the set of all polynomials with rational coefficients.
For two polynomials $F(x)$ and $G(x)$ with rational coefficients,
we write $F \sim G$ if there exist rational numbers $b>0$, $c$ and an integer $d$ such that
\begin{equation*}
F(x) \equiv G(bx+c)+d,
\end{equation*}
and $F \not\sim G$ otherwise.
Clearly, this is an equivalence relation.

Let $f_j(x)$ $(j=1,2,\dots,\ell)$ be polynomials with rational coefficients such that
\mbox{$f_{j_1} \sim f_{j_2}$} for any integers $j_1$, $j_2$ $(1 \le j_1 \le j_2 \le \ell)$.
Then, there exists a polynomial $g(x)$ such that
there exist integers $B_j>0$, $C_j$ and $D_j$ $(j=1,2,\dots,\ell)$ satisfying
\begin{equation}
g(x) \equiv f_j \left( \frac{x+C_j}{B_j} \right) +D_j \quad (j=1,2,\dots,\ell). \label{1.1.gg}
\end{equation}
Indeed, by $f_1\sim f_j$ for every $j=1,2,\dots,\ell$,
there exist integers $r_j\ge1$, $s_j\ge1$, $t_j$ and $u_j$ $(j=1,2,\dots,\ell)$ such that
\begin{equation*}
f_1(x) \equiv f_j \left( \frac{s_j x+t_j}{r_j} \right) +u_j \quad (j=1,2,\dots,\ell).
\end{equation*}
Let $K$ be the least common multiple of $s_j$ $(j=1,2,\dots,\ell)$ and $g(x) \= f_1(x/K)$. Then,
\begin{equation*}
g(x) \equiv f_j \left( \frac{x+t_j v_j}{r_j v_j} \right) +u_j \quad (j=1,2,\dots,\ell),
\end{equation*}
where $v_j \= K/s_j$ $(j=1,2,\dots,\ell)$ are positive integers. 
Thus, $g(x)$ satisfies \eqref{1.1.gg}.
Theorem \ref{thm2} is the following.

\newpage


\begin{thm}\label{thm2} 
Let $\ell_1, \ell_2, \dots, \ell_m$ be positive integers. Let 
\begin{equation*}
\cU \= \bigcup_{i=1}^{m} \{ (i,j) \in \Z_{\ge1}^2 \mid j=1,2,\dots,\ell_i \}
\end{equation*}
and $f_{i,j}(x)$ $\bigl( (i,j) \in \cU \bigr)$ be integer-valued polynomials of degree $\ge2$
with positive leading coefficients such that 
\begin{equation*}
\begin{cases}
f_{i_1,j_1} \sim f_{i_2,j_2} & \text{ if }\ i_1=i_2, \\
f_{i_1,j_1} \not\sim f_{i_2,j_2} & \text{ if }\ i_1\neq i_2.
\end{cases}
\end{equation*}
Let $g_i(x)$ $(i=1,2,\dots,m)$ be polynomials given by \eqref{1.1.gg}, namely,
for each $i=1,2,\dots,m$, there exist integers $B_{i,j}>0$, $C_{i,j}$ and $D_{i,j}$ $(j=1,2,\dots,\ell_i)$ satisfying
\begin{equation}
g_i(x) \equiv f_{i,j} \left( \frac{x+C_{i,j}}{B_{i,j}} \right) +D_{i,j} \quad (j=1,2,\dots,\ell_i). \label{1.2.g_i}
\end{equation}
Let $q$ be a Pisot or Salem number, and 
$P_{i,j}(x)$ $\bigl( (i,j) \in \cU \bigr)$ be non-zero polynomials 
with algebraic integer coefficients in $\Q(q)$.
Then, the numbers
\begin{equation}
1,\qquad \sum_{n=1}^{\infty} \frac{P_{i,j} (n)}{q^{f_{i,j} (n)}} \quad \bigl( (i,j) \in \cU \bigr) \label{1.2.ind}
\end{equation}
are \hfill linearly \hfill independent \hfill over \hfill $\Q(q)$ \hfill
if \hfill and \hfill only \hfill if \hfill for \hfill each \hfill $i=1,2,\dots,m$, \hfill the \hfill sequences \\
$\{ \tilde{p}_{i,j}(n) \}_{n \ge 0}$ $(j=1,2,\dots,\ell_i)$ are linearly independent over $\Q(q)$, where
\begin{equation*}
\tilde{p}_{i,j}(n) \=
\begin{dcases*}
P_{i,j} \left( \frac{n+C_{i,j}}{B_{i,j}} \right) & if $\ n \equiv -C_{i,j} \pmod{B_{i,j}}$, \\
0 & otherwise
\end{dcases*} 
\qquad (j=1,2,\dots,\ell_i).
\end{equation*}
\end{thm}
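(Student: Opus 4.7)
My plan is to reduce Theorem \ref{thm2} to Theorem \ref{thm1} via the transformations \eqref{1.2.g_i}, which collapse every equivalence class of polynomials $\{f_{i,j}\}_{j}$ into the single polynomial $g_i$. The central computation is that substituting $N=B_{i,j}n-C_{i,j}$ and invoking \eqref{1.2.g_i} converts each series into one keyed on $g_i$:
\begin{equation*}
\sum_{n=1}^{\infty}\frac{P_{i,j}(n)}{q^{f_{i,j}(n)}} = q^{D_{i,j}}\sum_{N=0}^{\infty}\frac{\tilde{p}_{i,j}(N)}{q^{g_i(N)}} + R_{i,j},
\end{equation*}
where $R_{i,j}\in\Q(q)$ is a finite correction accounting for the finitely many starting indices.

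For the forward implication, I argue contrapositively. If for some $i^*$ there exist $k_1,\dots,k_{\ell_{i^*}}\in\Q(q)$ not all zero satisfying $\sum_{j}k_{j}\tilde{p}_{i^*,j}(n)=0$ for every $n\ge0$, then the choice $c_{i^*,j}\=k_{j}q^{-D_{i^*,j}}$, $c_{i,j}\=0$ for $i\neq i^*$, collapses $\sum_{(i,j)\in\cU}c_{i,j}\sum_{n}P_{i,j}(n)/q^{f_{i,j}(n)}$ into a finite element of $\Q(q)$, producing a non-trivial $\Q(q)$-linear relation among the numbers \eqref{1.2.ind}.

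For the converse, I assume $\{\tilde{p}_{i,j}\}_{j}$ is linearly independent over $\Q(q)$ for each $i$ and suppose toward a contradiction that $k_0+\sum_{(i,j)\in\cU}k_{i,j}\sum_{n}P_{i,j}(n)/q^{f_{i,j}(n)}=0$ is a non-trivial relation. After clearing denominators and multiplying by a suitable power of $q$, every $k_{i,j}q^{D_{i,j}}$ is an algebraic integer of $\Q(q)$. The displayed identity then rewrites the relation as
\begin{equation*}
k_0' + \sum_{i=1}^{m}\sum_{N=0}^{\infty}\frac{a_i(N)}{q^{g_i(N)}} = 0, \qquad a_i(N) \= \sum_{j=1}^{\ell_i}k_{i,j}q^{D_{i,j}}\tilde{p}_{i,j}(N),
\end{equation*}
with $k_0'\in\Q(q)$; linear independence of $\{\tilde{p}_{i^*,j}\}_{j}$ forces $a_{i^*}\not\equiv 0$ as soon as some $k_{i^*,j^*}\neq 0$. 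Setting $E_i\=\lcm_{j}B_{i,j}$ and partitioning each inner sum by residue $r$ modulo $E_i$, I define $c_{i,r}(m)\=a_i(E_im+r)$ and $h_{i,r}(x)\=g_i(E_ix+r)$ for those $r$ with $c_{i,r}\not\equiv 0$. Each $c_{i,r}$ is a polynomial in $m$ with algebraic integer values, and $h_{i,r}$ is an integer-valued polynomial of degree $\ge 2$ with positive leading coefficient, since $g_i$ takes integer values on the residue classes supporting $a_i$. Applying Theorem \ref{thm1} to $\{h_{i,r}\}$ with sequences $\{c_{i,r}\}$ then contradicts the resulting identity $k_0'+\sum_{(i,r)}\sum_{m}c_{i,r}(m)/q^{h_{i,r}(m)}=0$, whose coefficient against each surviving series equals $1\neq 0$.

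The main obstacle is to verify condition \eqref{i} of Theorem \ref{thm1} for the new family $\{h_{i,r}\}$. For $i_1\neq i_2$, an identity $h_{i_1,r_1}(x+A)\equiv h_{i_2,r_2}(Bx+C)+D$ unwinds, after the affine change $y=E_{i_1}x+E_{i_1}A+r_1$, into $g_{i_1}(y)\equiv g_{i_2}(\tilde{B}y+\tilde{C})+D$ for some positive rational $\tilde{B}$ and rational $\tilde{C}$, i.e., $g_{i_1}\sim g_{i_2}$, which contradicts the hypothesis $f_{i_1,\cdot}\not\sim f_{i_2,\cdot}$. For $i_1=i_2=i$ with $r_1\neq r_2$, comparing leading coefficients forces $B=1$, and then the coefficient of $x^{\deg g_i -1}$ in $g_i(E_ix+E_iA+r_1)\equiv g_i(E_ix+E_iC+r_2)+D$ yields $E_iA+r_1=E_iC+r_2$, so $r_1\equiv r_2\pmod{E_i}$, contradicting $r_1\neq r_2$ in $\{0,\dots,E_i-1\}$. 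Condition \eqref{ii} is automatic since each $c_{i,r}$ is a non-zero polynomial in $m$, so $|c_{i,r}(m)|$ does not tend to zero along any arithmetic progression.
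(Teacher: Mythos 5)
Your proposal is correct and follows essentially the same route as the paper: the same change of variables via \eqref{1.2.g_i} to re-key every series on $g_i$, the same splitting into residue classes modulo $\lcm_j B_{i,j}$, and the same application of Theorem \ref{thm1} to the polynomials $h_{i,r}(x)=g_i(K_ix+r)$, with conditions \eqref{i} and \eqref{ii} verified exactly as in the paper's argument. The only differences are cosmetic (you phrase the converse as a direct contradiction rather than via the set $\cV$ of surviving residue classes, and you normalize the powers $q^{D_{i,j}}$ slightly less explicitly than the paper's $q^{D_{i,j}-d}$ device).
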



In Theorem \ref{thm2}, if $\deg P_{i,j}(x)$ $\bigl( (i,j) \in \cU \bigr)$ are distinct, 
we immediately see that for each $i=1,2,\dots,m$, the sequences 
$\{ \tilde{p}_{i,j}(n) \}_{n \ge 0}$ $(j=1,2,\dots,\ell_i)$ are linearly independent over $\Q(q)$.
Hence, we have

\begin{cor}\label{cor2}
Let $q$ be a Pisot or Salem number. 
Let $f_j(x)$ $(j=1,2,\dots,\ell)$ be integer-valued polynomials of degree $\ge2$
with positive leading coefficients.
Let $P_j(x)$ $(j=1,2,\dots,\ell)$ be non-zero polynomials 
with algebraic integer coefficients in $\Q(q)$ and
$\deg P_{j_1}(x) \neq \deg P_{j_2}(x)$ for every integers $j_1,j_2$ $(1\le j_1 < j_2 \le \ell)$. 
Then, the numbers
\begin{equation*}
1,\qquad \sum_{n=1}^{\infty} \frac{P_j (n)}{q^{f_j (n)}} \quad (j=1,2,\dots,\ell)
\end{equation*}
are linearly independent over $\Q(q)$.
\end{cor}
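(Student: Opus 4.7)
The plan is to reduce Corollary \ref{cor2} directly to Theorem \ref{thm2}. The equivalence relation $\sim$ partitions the index set $\{1,2,\dots,\ell\}$ into disjoint classes $I_1,I_2,\dots,I_m$, where $j_1$ and $j_2$ belong to the same class if and only if $f_{j_1}\sim f_{j_2}$. Setting $\ell_i \= |I_i|$ and relabelling, I rewrite the given data as $f_{i,j}$ and $P_{i,j}$ indexed by $(i,j)\in \cU$, so that $f_{i_1,j_1}\sim f_{i_2,j_2}$ if and only if $i_1=i_2$, as required by Theorem \ref{thm2}. The discussion preceding Theorem \ref{thm2} then produces, for each $i$, a polynomial $g_i(x)$ and integers $B_{i,j}>0$, $C_{i,j}$, $D_{i,j}$ satisfying \eqref{1.2.g_i}.

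By Theorem \ref{thm2}, the numbers in \eqref{1.2.ind} are linearly independent over $\Q(q)$ if and only if, for every $i\in\{1,\dots,m\}$, the sequences $\{\tilde{p}_{i,j}(n)\}_{n\ge 0}$ $(j=1,\dots,\ell_i)$ are linearly independent over $\Q(q)$. Under the hypothesis that the degrees $\deg P_{i,j}$ are pairwise distinct across all $(i,j)\in\cU$, they are a fortiori pairwise distinct for each fixed $i$, so the only remaining task is a short polynomial-sequence lemma.

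Fix $i$ and suppose $\sum_{j=1}^{\ell_i} k_j\,\tilde{p}_{i,j}(n)=0$ for every $n\ge 0$, with $k_j\in\Q(q)$. Let $L\=\lcm(B_{i,1},\dots,B_{i,\ell_i})$ and, for each $a\in\{0,1,\dots,L-1\}$, put $S_a \= \{j : a\equiv -C_{i,j} \pmod{B_{i,j}}\}$. Since $B_{i,j}\mid L$, the congruence $Lm+a\equiv -C_{i,j} \pmod{B_{i,j}}$ is equivalent to $j\in S_a$, independent of $m$. Restricting the hypothesis to $n=Lm+a$ therefore yields, for all $m\ge 0$,
\[
\sum_{j\in S_a} k_j\,P_{i,j}\!\left(\frac{Lm+a+C_{i,j}}{B_{i,j}}\right)=0.
\]
The $j$-th summand is a polynomial in $m$ of degree $\deg P_{i,j}$ (since $L/B_{i,j}>0$), and these degrees are pairwise distinct. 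A sum of polynomials of distinct degrees vanishes identically only if each summand does; since each $P_{i,j}$ is a non-zero polynomial, this forces $k_j=0$ for every $j\in S_a$. As each $j\in\{1,\dots,\ell_i\}$ lies in $S_a$ for $a\equiv -C_{i,j}\pmod{B_{i,j}}$, we obtain $k_1=\cdots=k_{\ell_i}=0$.

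I expect no serious obstacle: the corollary contributes no analytic content beyond Theorem \ref{thm2}. The only care needed is the bookkeeping of the relabelling and the observation that the affine substitution $x\mapsto (Lm+a+C_{i,j})/B_{i,j}$ preserves the degree of $P_{i,j}$, so that the distinctness of degrees transfers cleanly from the polynomials themselves to the restricted identity in $m$.
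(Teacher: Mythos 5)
Your proposal is correct and follows essentially the same route as the paper: the paper derives Corollary \ref{cor2} from Theorem \ref{thm2} by (implicitly) grouping the $f_j$ into $\sim$-equivalence classes and remarking that distinct degrees of the $P_{i,j}$ "immediately" force linear independence of the sequences $\{\tilde{p}_{i,j}(n)\}_{n\ge0}$. You merely make explicit the two steps the paper leaves to the reader — the relabelling into classes and the restriction to arithmetic progressions $n=Lm+a$ showing that a vanishing sum of polynomials of pairwise distinct degrees has all coefficients $k_j=0$ — and both are carried out correctly.
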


\noindent
As an application of Theorem \ref{thm2}, we will give an example of linear dependence
(see Example \ref{exm5} in Section \ref{sec5}). \\


The present paper is organized as follows.
We will prove Theorem \ref{thm1} in Section \ref{sec3},
and Theorem \ref{thm2} in Section \ref{sec4}.
Before proving Theorem \ref{thm1}, we give Lemma \ref{lem1} in Section \ref{sec2}.
Lemma \ref{lem1} is the most important part to show Theorem \ref{thm1}.
Also, we will prove Theorem \ref{thm2} by using a result of Theorem \ref{thm1}.
In Section \ref{sec5}, we give some examples of the set of polynomials
which Theorems \ref{thm1} and \ref{thm2} are useful.
Throughout this paper, the term increasing function refers to a non-decreasing function.


\section{A lemma}\label{sec2}

In this section, we show the following lemma which plays an important role
in the proof of Theorem \ref{thm1}.

\begin{lemma}\label{lem1}
Let $\ell \ge1$ be an integer.
Let $f_j(x)$ $(j = 1,2,\dots,\ell)$ be integer-valued polynomials 
of degree $\ge2$ with positive leading coefficients and
$\deg f_1(x) \le \deg f_j(x)$ $(j =1,2,\dots,\ell)$.
Moreover, assume that there exists an integer $A$ such that
\begin{equation}
f_1 (x+A) \not\equiv f_j (Bx+C)+D \label{2.1.not}
\end{equation}
for any integers $j \ge 2$, $C$, $D$ and any positive rational number $B$.
Let $E$ be an arbitrary positive integer and let $G(x)$ be any real-valued function for $x \ge 1$ 
with $G(x)=o(x)$ and $G(x) \to \infty$ $(x \to \infty)$.
Then, there exist infinitely many positive integers $m$ with $m \equiv A \pmod{E}$ such that
\begin{equation*}
|f_1(m)-f_j(k)| > G(m)
\end{equation*}
for any positive integers $j$ and $k$ with $(j,k) \neq (1,m)$.
\end{lemma}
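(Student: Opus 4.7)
The plan is, for each index $j \in \{1, \ldots, \ell\}$, to describe the set of $m \in A + E\Z$ that is bad for $j$ (i.e., $|f_1(m) - f_j(k)| \le G(m)$ for some $k$ with $(j,k) \ne (1,m)$), and then to use the hypothesis to show that the complement of the union of bad sets is infinite. Set $d := \deg f_1 \ge 2$.

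The easy cases come first. For $j = 1$, strict monotonicity of $f_1$ and the gap bound $f_1(m+1) - f_1(m) \asymp m^{d-1} \ge c m \gg G(m)$ rule out any $k \neq m$. For $j \ge 2$ with $\deg f_j > d$: any admissible $k$ satisfies $k \asymp m^{d/\deg f_j}$, so for $m \in [N,2N]$ the possible $k$'s lie in an interval of length $O(N^{d/\deg f_j}) = o(N)$, and each $k$ matches at most $O(1)$ values of $m$ by monotonicity; hence the bad set has size $o(N)$.

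The central case is $j \ge 2$ with $\deg f_j = d$. Let $\alpha_j := (c_1/c_j)^{1/d}$ with $c_1, c_j$ the leading coefficients, and Taylor-expand $f_j(k)$ around $k = \alpha_j m + \delta$: the condition $|f_1(m) - f_j(k)| = o(m)$ forces $\delta = \delta_j^* + o(1)$, where $\delta_j^* := (a_{d-1} - b_{j,d-1}\alpha_j^{d-1})/(d c_j \alpha_j^{d-1})$ is chosen so that $\phi_j(x) := f_1(x) - f_j(\alpha_j x + \delta_j^*)$ has degree $\le d-2$. Three sub-cases arise: if $\alpha_j$ is irrational, Weyl equidistribution on $A + E\Z$ (valid since $\alpha_j E$ is irrational) makes the bad set have density zero; if $\alpha_j = p_j/q_j$ in lowest terms with $\delta_j^* \notin (1/q_j)\Z$, then $\alpha_j m + \delta_j^*$ stays bounded away from $\Z$ and the bad set is finite; finally if $\alpha_j = p_j/q_j$ and $\delta_j^* = e_j/q_j$ with $e_j \in \Z$, setting $k = (p_j m + e_j)/q_j$ (an integer iff $m \equiv r_j := -e_j p_j^{-1} \pmod{q_j}$) gives $f_1(m) - f_j(k) = \phi_j(m)$, and since $\deg \phi_j \le d-2$ an infinitude of bad $m$ forces $\phi_j$ to be a constant $D_j \in \Z$ (else its leading term dominates $G(m)$). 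In this critical sub-case we obtain the polynomial identity $f_1(x) \equiv f_j((p_j x + e_j)/q_j) + D_j$, and the bad set in $A + E\Z$ equals $\{m \equiv r_j \pmod{q_j}\} \cap (A + E\Z)$ up to finitely many exceptions.

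Applying the hypothesis, substitute $x \mapsto x+A$ in the above identity to get $f_1(x+A) \equiv f_j((p_j/q_j)x + (p_jA+e_j)/q_j) + D_j$; the hypothesis forbids this form with integer $C = (p_jA+e_j)/q_j$, so $A \not\equiv r_j \pmod{q_j}$ for every $j$ in the critical sub-case. Letting $J$ denote that set of indices, the set $S := \{m \in A + E\Z : m \not\equiv r_j \pmod{q_j} \text{ for all } j \in J\}$ contains $A$, is non-empty, and hence by periodicity modulo $\lcm(E, \{q_j\}_{j \in J})$ is infinite. Removing the finite and density-zero bad $m$ arising from the other cases still leaves infinitely many good $m$. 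The principal technical challenge is the Taylor-expansion trichotomy of the central case: identifying $\delta_j^*$ and recognizing that the constant-$\phi_j$ alternative yields precisely the polynomial identity that the hypothesis is designed to exclude; once this is established, the conclusion reduces to observing that $A$ itself lies outside every critical bad residue class.
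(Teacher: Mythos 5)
Your proof is correct in its essentials, but it takes a genuinely different route from the paper's. You argue directly: for each $j$ you classify the set of ``bad'' $m\equiv A\pmod E$, showing it is finite (the $j=1$ case, the case $\alpha_j=p_j/q_j$ with $\delta_j^*\notin(1/q_j)\Z$, and the case $\deg\phi_j\ge1$), of density zero (the case $\deg f_j>\deg f_1$, by counting admissible $k$, and the case $\alpha_j$ irrational, by Weyl equidistribution of $\alpha_jEn$ mod $1$), or --- in the critical case where $f_1(x)\equiv f_j(\alpha_jx+\delta_j^*)+D_j$ with $D_j\in\Z$ --- a full residue class $m\equiv r_j\pmod{q_j}$ that the hypothesis \eqref{2.1.not} forces $A$ to avoid; the good set then has positive density inside $A+E\Z$. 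The paper instead argues by contradiction: it pigeonholes the bad $m$'s into short windows $\cM_n$ to find one index $j_0$ hit more than $d(2g(n)+1)$ times, uses a pair of nearby bad $m$'s and the mean value theorem to produce \emph{bounded integers} $v,w$ with $w/(E_0v)=(\la_d/\psi_d)^{1/d}$ --- thereby deriving the rationality of $\alpha_{j_0}$ and the integrality of $\xi_0$'s denominator elementarily, with no appeal to equidistribution --- and then converts ``$d+1$ solutions of $f_1(x)=f_{j_0}(\alpha x+\xi_0)+D_0$'' into a polynomial identity contradicting \eqref{2.1.not}. Your approach buys a cleaner conceptual picture (bad sets are sparse or are exactly the arithmetic progressions the hypothesis excludes, and the role of the shift $A$ is transparent: $A$ itself witnesses a good residue class modulo $\lcm(E,\{q_j\})$); the paper's buys self-containedness, replacing equidistribution by the observation that an abundance of bad $m$'s in a short window forces two of them to be within a bounded multiple of $E_0$ of each other. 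If you write your version up in full, the two places needing the most care are the uniformity in the ``$\delta=\delta_j^*+o(1)$'' expansion (the error must be $O(G(m)/m^{d-1})+O(1/m)$, which is where $d\ge2$ and $G(x)=o(x)$ enter) and the final density bookkeeping, both of which you have essentially right.
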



\begin{proof}
Suppose to the contrary that there exists a positive integer $m_0$ such that 
for any positive integer $m \ge m_0$ with $m \equiv A \pmod{E}$, 
there exists a positive integer pair $(j_m,k_m) \neq (1,m)$ satisfying 
\begin{equation}
|f_1(m)-f_{j_m}(k_m)| \le G(m). \label{2.1.cont}
\end{equation}
In what follows, we prove that this contradicts \eqref{2.1.not} by following five steps below.
Before Step 1, we need some preparation.

Let $d \= \deg f_1(x) \ge 2$ and 
$r_j/s_j$ $(j=1,2,\dots,\ell)$ be rational leading coefficients of $f_j(x)$,
where $r_j$ and $s_j$ are positive coprime integers. We define
\begin{equation*}
g(x) \= \max_{\substack{y \in \Z \\ 1\le y \le x}} G(y) \quad (x \ge 1).
\end{equation*}
Then, $g(x)$ $(x \ge 1)$ is an increasing function with $g(x) \to \infty$ $(x \to \infty)$ and
\begin{equation*}
g(x)=o(x).
\end{equation*}
Also, we define
\begin{align}
E_0 &\= E s_1\cdot \lcm(r_1,r_2,\dots,r_\ell), \label{2.1.E_0} \\
h(x) &\= E_0 d \ell \bigl( 2g(x)+1 \bigr). \notag
\end{align}
For every sufficiently large integer $n$, let
\begin{equation}
\cM_n \= \{ m \in \Z_{\ge 1} \mid m \equiv A \pmod{E_0},\ n-h(n) < m < n \}. \label{2.1.W}
\end{equation}
By the hypotheses, note that $f_j(x)$, $f'_j(x)$ $(j=1,2,\dots,\ell)$ are increasing for large $x$.
Here, for two real functions $F_1(x)$ and $F_2(x)$, we write $F_1(x) \asymp F_2(x)$
if there exist positive real numbers $c_1$ and $c_2$ independent of $x$ such that 
$c_1 F_2(x) \le F_1(x) \le c_2 F_2(x)$ for all large $x$.
Also for a real function $F(x)$ and a real number $K_n$ dependent on an integer $n$, 
we write $K_n \asymp F(n)$
if there exist positive real numbers $c_3$ and $c_4$ independent of $n$ such that 
$c_3 F(n) \le K_n \le c_4 F(n)$ for all large integers $n$. \\


\noindent \textbf{Step 1.}
For every sufficiently large integer $n$ and for each $j=2,3,\dots,\ell$, we define
\begin{equation*}
\cM_n(j) \= \{ m \in \cM_n \mid \text{ there exists a positive integer $k_m$ such that } 
|f_1(m)-f_j(k_m)| \le g(n) \}.
\end{equation*}
In this step, we show that there exists an integer $j_0$ $(2 \le j_0 \le \ell)$ such that
there exist infinitely many positive integers $n$ satisfying
\begin{equation}
\# \cM_n(j_0) > d \bigl( 2g(n)+1\bigr). \label{2.1.num}
\end{equation}

In what follows, let $n$ be every sufficiently large integer.
Since $h(x)=O \bigl( g(x) \bigr)=o(x)$, we get $n-h(n) \to \infty$ $(n \to \infty)$.
Then by \eqref{2.1.cont} and \eqref{2.1.W}, we see that for any integer $m \in \cM_n$, 
there exists a positive integer pair $(j_m,k_m) \neq (1,m)$ satisfying
\begin{equation}
|f_1(m)-f_{j_m}(k_m)| \le G(m) \le \max_{\substack{y \in \Z \\ 1\le y \le n}} G(y) = g(n). \label{2.1.sup}
\end{equation}
Also, we get 
\begin{equation}
f_1(x_1) \le f_1(x_2) \quad \bigl(1\le x_1\le n-h(n)-1\le x_2 \bigr). \label{2.1.x_1,2}
\end{equation}
In addition,
since $f_1(x)-f_1(x-1) \asymp x^{d-1}$ $(d \ge 2)$, $g(x)=o(x)$ and $n-h(n) \asymp n$, we obtain
\begin{equation}
f_1(x)-f_1(x-1)>2g(n) \quad \bigr(x \ge n-h(n) \bigl). \label{2.1.i}
\end{equation}
By \eqref{2.1.sup}, \eqref{2.1.x_1,2} and \eqref{2.1.i}, 
we get $j_m \neq 1$ for any integer $m \in \M_n$, and hence we have $\ell \ge 2$.
Then by \eqref{2.1.W} and \eqref{2.1.sup}, we obtain
\begin{equation*}
\# \bigcup_{j=2}^\ell \cM_n(j) = \# \cM_n \ge \frac{h(n)}{E_0}-1 > d(\ell-1) \bigl( 2g(n)+1\bigr).
\end{equation*}
Thus, for every sufficiently large integer $n$,
there exists an integer $j(n)$ $\bigl( 2 \le j(n) \le \ell \bigr)$ such that
$\# \cM_n \bigl( j(n) \bigr) > d \bigl( 2g(n)+1\bigr)$.
Therefore, we see that \eqref{2.1.num} holds. \\


\noindent \textbf{Step 2.}
In what follows, let $N$ be a sufficiently large integer satisfying 
\begin{equation}
\# \cM_N(j_0) > d \bigl( 2g(N)+1\bigr). \label{2.1.M_N}
\end{equation}
Let $t \in \cM_N(j_0)$, $u\ge1$, $v\ge1$ and $w$ be integers 
with $t+E_0v \in \cM_N(j_0)$ and $u+w \ge 1$ such that
\begin{numcases}{}
|f_1(t) - f_{j_0}(u)| \le g(N), \label{2.1.rs} \\
|f_1(t+E_0v) - f_{j_0}(u+w)| \le g(N), \label{2.1.tu}
\end{numcases}
and
\begin{equation}
(t+E_0v)-t = \min \{|m_1-m_2| \mid m_1,m_2 \in \cM_N(j_0),\ m_1 \neq m_2 \}. \label{2.1.E_0v}
\end{equation}
In this step, we show that $v$ and $w$ are bounded positive integers, 
and $\deg f_{j_0}(x) = \deg f_1(x) = d$ holds.

First if $v > 2\ell$, by \eqref{2.1.W}, \eqref{2.1.M_N} and \eqref{2.1.E_0v}, we obtain
\begin{equation*}
2E_0\ell \{d \bigl( 2g(N)+1\bigr)-1 \} < m_{\max}-m_{\min} < h(N) = E_0 d \ell \bigl( 2g(N)+1 \bigr),
\end{equation*}
where $m_{\max}$ and $m_{\min}$ are the maximum and minimum of integers $m \in \cM_N(j_0)$,
respectively. This is a contradiction for large $N$.
Thus $1\le v \le 2\ell$, and hence $v$ is a bounded positive integer.

Next, we show that $\deg f_{j_0}(x) = \deg f_1(x) = d$ holds and $w$ is a bounded positive integer.
By \eqref{2.1.i}, we get $f_1(t+E_0v) - f_1(t) > 2g(N)$.
Then by \eqref{2.1.rs} and \eqref{2.1.tu}, we obtain $w \ge 1$.
Also since $t, t+E_0v \in \cM_N(j_0)$, we get
\begin{equation}
t, t+E_0v \asymp N. \label{2.1.rt}
\end{equation}
Let $d_0 \= \deg f_{j_0}(x) \ge d$. By \eqref{2.1.rs} and \eqref{2.1.tu}, we get
\begin{equation}
u, u+w \asymp N^{\frac{d}{d_0}}. \label{2.1.su}
\end{equation}
In addition, by \eqref{2.1.rs} and \eqref{2.1.tu}, we obtain
\begin{equation*}
\left| E_0v \frac{f_1(t+E_0v) - f_1(t)}{(t+E_0v)-t} 
- w \frac{f_{j_0}(u+w) - f_{j_0}(u)}{(u+w)-u} \right| \le 2g(N).
\end{equation*}
By the mean value theorem, 
there exist positive real numbers $\a$ and $\b$ with $t<\a<t+E_0v$ and $u<\b<u+w$ such that
\begin{equation}
|E_0v f'_1(\a) - w f'_{j_0}(\b)| \le 2g(N). \label{2.1.eta_34}
\end{equation}
By \eqref{2.1.rt} and \eqref{2.1.su}, we obtain
\begin{equation*}
f'_1(\a) \asymp N^{d-1}, \quad
f'_{j_0}(\b) \asymp (N^{ \frac{d}{d_0} })^{d_0-1} = N^{ d-\frac{d}{d_0} }.
\end{equation*}
Dividing by $f'_{1}(\a)$ in \eqref{2.1.eta_34}, we get
\begin{equation}
E_0v - w \frac{f'_{j_0}(\b)}{f'_1(\a)} = O \left( \frac{g(N)}{N^{d-1}} \right). \label{2.1.Ev-w}
\end{equation}
Note that $E_0v$ is bounded and $g(N)/N^{d-1} \to 0$ $(N \to \infty)$ since $d \ge 2$.
Also, we obtain
\begin{equation*}
\frac{f'_{j_0}(\b)}{f'_1(\a)} \asymp \frac{N^{d-\frac{d}{d_0}}}{N^{d-1}} = N^{1-\frac{d}{d_0}}.
\end{equation*}
If $d_0 \ge d+1$, we get $f'_{j_0}(\b)/f'_1(\a) \to \infty$ $(N \to \infty)$.
This contradicts \eqref{2.1.Ev-w} since $w \ge 1$.
Thus $d_0 = d$, and hence $f'_{j_0}(\b)/f'_1(\a) \asymp 1$.
Then by \eqref{2.1.Ev-w}, we see that $w$ is a bounded positive integer. \\


\noindent \textbf{Step 3.}
Let $f_1(x) \= \sum_{k=0}^{d} \la_k x^k$ and $f_{j_0}(x) \= \sum_{k=0}^{d} \psi_k x^k$
with some rational constants $\la_k$, $\psi_k$ $(k=0,1,\dots,d)$.
In this step, we show
\begin{equation}
\left( \frac{\la_d}{\psi_d} \right)^\frac{1}{d} = \frac{w}{E_0v}. \label{2.1.a/b}
\end{equation}

Since $\deg f_1(x) = \deg f_{j_0}(x) = d$ and $t, u \asymp N$, by \eqref{2.1.rs} we obtain
\begin{equation*}
\la_d t^d - \psi_d u^d = O(N^{d-1}) + O\bigl( g(N) \bigr).
\end{equation*}
Dividing by $t^d$, we get $\la_d - \psi_d (u/t)^d = O(1/N)$, and hence
\begin{equation*}
\biggl( \frac{u}{t} \biggr)^d = \frac{\la_d}{\psi_d} \left( 1+ O\Bigl( \frac{1}{N} \Bigr) \right).
\end{equation*}
Here for every $\nu>0$, we have
\begin{equation}
\left( 1+O \Bigl( \frac{1}{N} \Bigr) \right)^\nu = 1+O \Bigl( \frac{1}{N} \Bigr). \label{2.1.nu}
\end{equation}
Then,
\begin{equation}
\frac{u}{t} = \left( \frac{\la_d}{\psi_d} \right)^\frac{1}{d} \left( 1+ O\Bigl( \frac{1}{N} \Bigr) \right).
\label{2.1.s/r}
\end{equation}
In addition, since $E_0v$ and $w$ are bounded, by \eqref{2.1.eta_34} we obtain
\begin{equation*}
E_0v d \la_d t^{d-1} - w d \psi_d u^{d-1} = O(N^{d-2}) + O\bigl( g(N) \bigr). 
\end{equation*}
Dividing by $d \la_d t^{d-1}$, we get
\begin{equation*}
E_0v = w \frac{\psi_d}{\la_d} \biggl( \frac{u}{t} \biggr)^{d-1}
+ O\left( \frac{1}{N} \right) + O\left( \frac{g(N)}{N^{d-1}} \right).
\end{equation*}
By \eqref{2.1.nu} and \eqref{2.1.s/r}, we obtain
\begin{align*}
E_0v &= w \frac{\psi_d}{\la_d} \left( \frac{\la_d}{\psi_d} \right)^\frac{d-1}{d} 
\left( 1+ O\Bigl( \frac{1}{N} \Bigr) \right) 
+ O\left( \frac{1}{N} \right) + O\left( \frac{g(N)}{N^{d-1}} \right) \\
&= w \left( \frac{\psi_d}{\la_d} \right)^\frac{1}{d} + O\left( \frac{g(N)}{N} \right).
\end{align*}
Since $E_0v$ is bounded, we obtain
\begin{equation*}
\frac{w}{E_0v} = \left( \frac{\la_d}{\psi_d} \right)^\frac{1}{d} + O\left( \frac{g(N)}{N} \right).
\end{equation*}
Note that $g(N)/N \to 0$ $(N \to \infty)$.
Since $E_0v$ and $w$ are bounded integers, we get \eqref{2.1.a/b}. \\


\noindent \textbf{Step 4.}
Let $(a,b)$ be every pair with positive integers $a \in \cM_N(j_0)$ and $b$ satisfying
\begin{equation}
|f_1(a) - f_{j_0}(b)| \le g(N). \label{2.1.yz} \\
\end{equation}
In this step, we show that 
there exists a rational number $\xi_0$ independent of $a$ and $b$ such that
\begin{equation}
b = \left( \frac{\la_d}{\psi_d} \right)^\frac{1}{d} \ a + \xi_0. \label{2.1.xi_0}
\end{equation}

Since $a \asymp N$, we get $b \asymp N$.
Similarly to \eqref{2.1.s/r}, by \eqref{2.1.yz} we obtain
\begin{equation*}
\frac{b}{a} = \left( \frac{\la_d}{\psi_d} \right)^\frac{1}{d} \left( 1+ O\Bigl( \frac{1}{N} \Bigr) \right).
\end{equation*}
Multiplying by $a$, we get
\begin{equation}
b = \left( \frac{\la_d}{\psi_d} \right)^\frac{1}{d} a + O(1). \label{2.1.z}
\end{equation}
Let 
\begin{equation}
\xi_{a,b} \= b - \left( \frac{\la_d}{\psi_d} \right)^\frac{1}{d} a \label{2.1.xi_ab}
\end{equation}
for each pair $(a,b)$ satisfying \eqref{2.1.yz}.
By \eqref{2.1.z}, we see that $\xi_{a,b}$ is bounded.
Then, substituting $b = (\la_d/\psi_d)^{1/d} \cdot a + \xi_{a,b}$ in \eqref{2.1.yz}, we obtain
\begin{align*}
& \la_d a^d + \la_{d-1} a^{d-1}
- \psi_d \left\{ \left( \frac{\la_d}{\psi_d} \right)^\frac{1}{d} a \right\}^d 
- \psi_d d \left\{ \left( \frac{\la_d}{\psi_d} \right)^\frac{1}{d} a \right\}^{d-1} \xi_{a,b} 
- \psi_{d-1} \left\{ \left( \frac{\la_d}{\psi_d} \right)^\frac{1}{d} a \right\}^{d-1} \\
= \ & O(N^{d-2}) +O\bigl( g(N) \bigr).
\end{align*}
Dividing by $a^{d-1}$, we get
\begin{equation*}
\la_{d-1} - d \psi_d \left( \frac{\la_d}{\psi_d} \right)^\frac{d-1}{d} \xi_{a,b} 
- \psi_{d-1} \left( \frac{\la_d}{\psi_d} \right)^\frac{d-1}{d} = O\left( \frac{g(N)}{N} \right).
\end{equation*}
Then, we get
\begin{equation*}
\xi_{a,b} = \frac{\la_{d-1}}{d \la_d} \left( \frac{\la_d}{\psi_d} \right)^\frac{1}{d} 
- \frac{\psi_{d-1}}{d \psi_d} + O\left( \frac{g(N)}{N} \right).
\end{equation*}
Note that $g(N)/N \to 0$ $(N \to \infty)$.
Since $E_0v$ is a bounded integer, by \eqref{2.1.a/b} and \eqref{2.1.xi_ab},
$\xi_{a,b}$ is a rational number which the denominator is bounded, and hence we obtain
\begin{equation*}
\xi_{a,b} = \frac{\la_{d-1}}{d \la_d} \left( \frac{\la_d}{\psi_d} \right)^\frac{1}{d} 
- \frac{\psi_{d-1}}{d \psi_d}.
\end{equation*}
Therefore, for each pair $(a,b)$ satisfying \eqref{2.1.yz}, 
$\xi_{a,b}$ is a rational number independent of $a$ and $b$.
Thus by \eqref{2.1.xi_ab}, we see that \eqref{2.1.xi_0} holds. \\


\noindent \textbf{Step 5.}
Finally, we show that \eqref{2.1.cont} contradicts \eqref{2.1.not}. 
By \eqref{2.1.M_N}, \eqref{2.1.yz} and \eqref{2.1.xi_0}, 
there exists an integer $D_0$ with $|D_0| \le g(N)$ such that the equation
\begin{equation*}
f_1(x) = f_{j_0} \left( \left( \frac{\la_d}{\psi_d} \right)^\frac{1}{d} x+\xi_0 \right) + D_0
\end{equation*}
has at least $d+1$ solutions. 
Since $\deg f_1(x)= \deg f_{j_0}(x) = d$, we obtain
\begin{equation*}
f_1(x) \equiv f_{j_0} \left( \left( \frac{\la_d}{\psi_d} \right)^\frac{1}{d} x+\xi_0 \right) + D_0.
\end{equation*}
Here by \eqref{2.1.rs} and \eqref{2.1.xi_0}, we get
\begin{equation}
u = \left( \frac{\la_d}{\psi_d} \right)^\frac{1}{d} t +\xi_0. \label{2.1.s}
\end{equation}
Replacing $x$ with $x+t$, by \eqref{2.1.a/b} and \eqref{2.1.s}, we obtain
\begin{equation}
f_1(x+t) \equiv f_{j_0} \left( \frac{w}{E_0v} \ x+u \right) + D_0. \label{2.1.t}
\end{equation}
Now comparing the leading coefficients, we get
$r_1/s_1= (r_{j_0}/s_{j_0}) \{ w/(E_0v) \}^d$, namely,
\begin{equation*}
r_1 s_{j_0} E_0^d = r_{j_0} s_1 \left( \frac{w}{v} \right)^d.
\end{equation*}
By \eqref{2.1.E_0}, we get $r_{j_0} s_1 \mid E_0$. Then, $B_0 \= w/v$ is a positive integer
since $v$ and $w$ are positive integers.
Also since $t \in \cM_N(j_0)$, there exists a positive integer $T$ such that $t=E_0 T +A$.
By \eqref{2.1.t}, replacing $x$ with $x-E_0 T$, we obtain
\begin{equation*}
f_1(x+A) \equiv f_{j_0} \left( \frac{B_0}{E_0} x + (u-B_0 T) \right) +D_0.
\end{equation*}
This contradicts \eqref{2.1.not}. 
Therefore, the proof of Lemma \ref{lem1} is completed.
\end{proof}


\section{Proof of Theorem~\ref{thm1}}\label{sec3}

\begin{proof}
Suppose to the contrary that 
the numbers \eqref{1.1.ind} in Theorem \ref{thm1} are linearly dependent over $\Q(q)$. 
Then, there exist algebraic integers $b_j$ $(j=0,1,\dots,\ell)$ in $\Q(q)$ not all zero such that 
\begin{equation}
b_0 +\sum_{j=1}^{\ell} b_j \sum_{n=1}^{\infty} \frac{a_j (n)}{q^{f_j (n)}} =0. \label{1.1.ga}
\end{equation}
Let 
\begin{align*}
\cR &\= \{ j \in \Z_{\ge1} \mid b_j \neq 0,\ j=1,2,\dots,\ell \}, \\
\cS &\= \bigl\{ j \in \cR \mid \deg f_j(x) = \min_{k \in \cR} \{ \deg f_k(x) \} \bigr\}.
\end{align*}
Since $\cR \neq \emptyset$, we also obtain $\cS \neq \emptyset$.
By the conditions \eqref{i}, \eqref{ii} in Theorem \ref{thm1}, 
for the non-empty subset $\cS$ of $\{1,2,\dots,\ell \}$, 
there exist integers $i \in \cS$ and $A$ satisfying the following two conditions:
\begin{enumerate}
\setlength{\parskip}{0pt}
\setlength{\itemsep}{0pt}
\item[$\text{\rm(I)}$]
$f_i (x+A) \not\equiv f_j (Bx+C)+D$ for any integers 
$j \in \cS \setminus \{i\}$, $C$, $D$ and any positive rational number $B$.
\item[$\text{\rm(II)}$]
There exists a positive integer $E$ such that $\displaystyle \liminf_{n \to \infty} |a_i (En+A)| \neq 0$.
\end{enumerate}
Without loss of generality, we may assume $i=1$. Then, we get
\begin{equation}
f_1 (x+A) \not\equiv f_j (Bx+C)+D \label{1.1.not}
\end{equation}
for any integers $j \in \cR \setminus \{1\}$, $C$, $D$
and any positive rational number $B$, and
\begin{equation}
\liminf_{n \to \infty} |a_1(En+A)| \neq 0. \label{1.1.neq}
\end{equation}

Next, we apply Lemma \ref{lem1}. We define
\begin{equation}
h(x) \= 1+ \max_{\substack{n \in \Z \\ 1 \le n \le x}} \left\{
\sum_{j \in \cR} \sum_{\sigma} \log_{q} (1+|a_j(n)^\sigma|) \right\} \quad (x \ge 1), \label{1.1.h}
\end{equation}
where $\sigma$ runs through all embeddings of $\Q(q) \to \C$.
Also, we define
\begin{align}
H(x) &\= \min_{\substack{y \\ x \le y}} \frac{y}{h(y)} \quad (x \ge 1), \label{1.1.H} \\
G(x) &\= x \cdot H(x)^{-\frac{1}{4}} \quad (x \ge 1). \label{1.1.G}
\end{align}
By \eqref{1.1.h}, we obtain $h(x)=o(x)$ since $\log(1+|a_j (n)^\sigma|)=o(n)$
for every $j \in \cR$ and every embedding $\sigma \colon \Q(q) \to \C$.
Then by \eqref{1.1.H}, we get
\begin{equation}
H(x) \to \infty \quad (x \to \infty), \label{1.1.H_inf}
\end{equation}
and hence by \eqref{1.1.G}, we obtain
\begin{equation}
G(x)=o(x). \label{1.1.G_o}
\end{equation}
Also by \eqref{1.1.h}, we get $h(x) \ge 1$ for $x \ge 1$. 
Then by \eqref{1.1.H}, we get
\begin{equation}
0 < H(x) \le \frac{x}{h(x)} \le x \quad (x \ge 1), \label{1.1.H_x/h}
\end{equation}
and hence by \eqref{1.1.G}, we obtain
\begin{equation}
G(x) \ge x^{\frac{3}{4}} \to \infty \quad (x \to \infty). \label{1.1.G_inf}
\end{equation}
By \eqref{1.1.not}, \eqref{1.1.G_o} and \eqref{1.1.G_inf}, Lemma \ref{lem1} implies that 
there exist infinitely many positive integers $m$ with $m \equiv A \pmod{E}$ such that
\begin{equation}
|f_1(m)-f_j(k)| > G(m) \label{1.1.main}
\end{equation}
for any positive integers $j \in \cR$ and $k$ with $(j,k) \neq (1,m)$.

Let $m$ be a sufficiently large integer satisfying \eqref{1.1.main}. 
By the hypotheses, note that $f_j(x)$ $(j \in \cR)$ are increasing for large $x$.
We define
\begin{align}
n_{m,j} &\= \max \{n \in \Z_{\ge 1} \mid f_j(n) < f_1(m)-G(m) \} \quad (j \in \cR), \label{1.1.n_j,m} \\
N_{m,j} &\= \min \{n \in \Z_{\ge 1} \mid f_j(n) > f_1(m)+G(m) \} \quad (j \in \cR), \label{1.1.N_j,m} \\
K_m &\= \max_{j \in \cR} \{ f_j(n_{m,j}) \}. \label{1.1.K_m}
\end{align} 
Multiplying \eqref{1.1.ga} by $q^{K_m}$,
by \eqref{1.1.main}, \eqref{1.1.n_j,m} and \eqref{1.1.N_j,m}, we obtain
\begin{equation*}
b_0 q^{K_m} + \sum_{j \in \cR} b_j \sum_{n=1}^{n_{m,j}} a_j (n) q^{K_m-f_j (n)} 
+ b_1\ \frac{a_1(m)}{q^{f_1(m) - K_m}}
+ \sum_{j \in \cR} b_j \sum_{n=N_{m,j}}^{\infty} \frac{a_j (n)}{q^{f_j (n) - K_m}} = 0.
\end{equation*}
Let 
\begin{equation}
P_m(X) \= -b_0 X^{K_m} - \sum_{j \in \cR} b_j \sum_{n=1}^{n_{m,j}} a_j (n) X^{K_m-f_j (n)}. \label{1.1.P}
\end{equation}
Then,
\begin{equation}
P_m(q) = \frac{a_1(m)}{q^{f_1(m) - K_m}} \left\{b_1 + \sum_{j \in \cR} b_j
\sum_{n=0}^{\infty} \frac{a_j (N_{m,j}+n)}{a_1(m) q^{f_j (N_{m,j}+n) - f_1(m)}} \right\}. \label{1.1.q_ga}
\end{equation}

Now, we evaluate \eqref{1.1.q_ga}.
By $\deg f_1 (x) \le \deg f_j (x)$ $(j \in \cR)$ and \eqref{1.1.G_o},
there exists a positive integer $c_0$ independent of $m$
such that $f_j (c_0m) > f_1(m)+G(m)$ $(j \in \cR)$.
Then by \eqref{1.1.H_inf}, \eqref{1.1.n_j,m} and \eqref{1.1.N_j,m}, we get
\begin{equation}
n_{m,j} < N_{m,j} \le c_0m < m \cdot H(m)^{\frac{1}{2}} \quad (j \in \cR). \label{1.1.base}
\end{equation}
By \eqref{1.1.H_x/h}, we have $h(x) \le x/H(x)$ $(x \ge 1)$.
In addition, by \eqref{1.1.h} and \eqref{1.1.H}, $h(x)$ and $H(x)$ $(x \ge 1)$ are increasing functions.
Then by \eqref{1.1.H_inf} and \eqref{1.1.base}, for every integers $j \in \cR$ and $n\ge0$, we obtain
\begin{align*}
h(N_{m,j} +n) &\le h \bigl( m \cdot H(m)^{\frac{1}{2}} +n \bigr) \le 
\frac{m \cdot H(m)^{\frac{1}{2}} +n}{H \bigl( m \cdot H(m)^{\frac{1}{2}} +n \bigr) } \le
\frac{m \cdot H(m)^{\frac{1}{2}} +n}{H(m)} \\
&\le m \cdot H(m)^{-\frac{1}{2}} +n.
\end{align*}
Then by \eqref{1.1.G}, for every integers $j \in \cR$ and $n\ge0$, we obtain
\begin{equation}
h(N_{m,j} +n) \le G(m) \cdot H(m)^{-\frac{1}{4}} +n. \label{1.1.h(n)}
\end{equation}
Let $c_1,c_2,\dots$ be some positive constants independent of $m$.
By \eqref{1.1.neq}, we obtain $|a_1(m)| > c_1$.
Also by \eqref{1.1.h}, for every $j \in \cR$ and every embedding $\sigma \colon \Q(q) \to \C$,
we get
\begin{equation*}
|a_j (n)^\sigma| \le q^{h(n)} \quad (n \ge 1).
\end{equation*}
Since $h(x)$ $(x\ge1)$ is an increasing function, by \eqref{1.1.N_j,m}, we get $h(m) \le h(N_{m,1})$.
In addition, note that $G(x) \to \infty$ and $H(x) \to \infty$ $(x \to \infty)$.
By \eqref{1.1.n_j,m}, \eqref{1.1.K_m} and \eqref{1.1.h(n)}, we get
\begin{equation}
0 < \left| \frac{a_1(m)}{q^{f_1(m) - K_m}} \right| 
\le \frac{q^{h(m)}}{q^{G(m)}} 
\le \frac{q^{h(N_{m,1})}}{q^{G(m)}}
\le q^{-G(m) \left( 1-H(m)^{-1/4} \right)} \to 0 \quad (m \to \infty). \label{1.1.q_ga_1}
\end{equation}
Here by \eqref{1.1.N_j,m}, if $m$ is large, $N_{m,j}$ is also large for every $j \in \cR$.
Then, $f_j (x+1) - f_j (x) \ge 2$ for $x \ge N_{m,j}$ $(j \in \cR)$.
Thus by \eqref{1.1.N_j,m} and \eqref{1.1.h(n)}, for each $j \in \cR$, we obtain
\begin{align}
\left| \sum_{n=0}^{\infty} \frac{a_j (N_{m,j}+n)}{a_1(m) q^{f_j (N_{m,j}+n) - f_1(m)}} \right| 
&\le c_2\ \frac{1}{q^{f_j (N_{m,j}) - f_1(m)}} 
  \sum_{n=0}^{\infty} \frac{q^{h(N_{m,j}+n)}}{q^{f_j (N_{m,j}+n)-f_j (N_{m,j})}} \notag \\
&\le c_2\ \frac{q^{G(m) \cdot H(m)^{-1/4}}}{q^{G(m)}} \sum_{n=0}^{\infty} \frac{q^n}{q^{2n}} \notag \\
&= c_2 \ \frac{q}{q-1} \ q^{-G(m) \left (1-H(m)^{-1/4} \right)} \to 0 \quad (m \to \infty).
  \label{1.1.q_ga_2}
\end{align}
In addition, note that $b_j$ $(j \in \cR)$ are non-zero constants of algebraic integers in $\Q(q)$.
By \eqref{1.1.q_ga}, \eqref{1.1.q_ga_1} and \eqref{1.1.q_ga_2}, we obtain
\begin{equation}
0 < |P_m(q)| \le c_3 q^{-G(m) \left( 1-H(m)^{-1/4} \right)} \to 0 \quad (m \to \infty). \label{1.1.P(q)}
\end{equation}

Finally, we evaluate the norm over $\Q(q)/\Q$ of the number $P_m(q)$.
Note that $h(x)$ $(x \ge 1)$ is an increasing function.
Then, for every embedding $\sigma \colon \Q(q) \to \C$ with $q^\sigma \neq q$, 
by $|q^\sigma| \le 1$, \eqref{1.1.K_m}, \eqref{1.1.P}, \eqref{1.1.base} and \eqref{1.1.h(n)}, we get
\begin{align} 
|P_m(q)^\sigma| &\le |b_0^\sigma| \cdot |q^\sigma|^{K_m} + \sum_{j \in \cR} |b_j^\sigma| 
\sum_{n=1}^{n_{m,j}} |a_j (n)^\sigma| \cdot |q^\sigma|^{K_m-f_j (n)} \notag \\
&\le c_4 \sum_{j \in \cR} \sum_{n=1}^{n_{m,j}} |a_j(n)^\sigma|
\le c_4 \sum_{j \in \cR} n_{m,j} q^{h(N_{m,j})} 
\le c_5 m q^{G(m) \cdot H(m)^{-1/4}}. \label{1.1.P(q)^s}
\end{align}
By \eqref{1.1.K_m}, \eqref{1.1.P} and \eqref{1.1.P(q)}, 
we see that $P_m(q)$ is a non-zero algebraic integer in $\Q(q)$.
Thus by \eqref{1.1.H_inf}, \eqref{1.1.G_inf}, \eqref{1.1.P(q)} and \eqref{1.1.P(q)^s}, we obtain
\begin{align*}
1 \le |N_{\Q(q)/\Q} (P_m(q))| 
&\le c_3 q^{-G(m) \left( 1- H(m)^{-1/4} \right)} 
\left( c_5 m q^{G(m) \cdot H(m)^{-1/4}} \right)^{[\Q(q):\Q]-1} \\
&\le c_6 m^{[\Q(q):\Q]-1} q^{-m^{3/4} \left( 1-[\Q(q):\Q] \cdot H(m)^{-1/4} \right)} 
\to 0 \quad (m \to \infty).
\end{align*}
This is a contradiction for the sufficiently large integer $m$. 
Therefore, the proof of Theorem \ref{thm1} is completed.
\end{proof}


\section{Proof of Theorem \ref{thm2}}\label{sec4}

\begin{proof}
Suppose that there exist algebraic integers $b_0$, $b_{i,j}$ $\bigl( (i,j) \in \cU \bigr)$ in $\Q(q)$ 
such that
\begin{equation*}
b_0 + \sum_{i=1}^m \sum_{j=1}^{\ell_i} b_{i,j} \sum_{n=1}^\infty \frac{ P_{i,j}(n) }{ q^{f_{i,j}(n)} } = 0.
\end{equation*}
For each $(i,j) \in \cU$, we get
\begin{equation*}
\sum_{n=1}^\infty \frac{ P_{i,j}(n) }{ q^{f_{i,j}(n)} } 
= \sum_{n=0}^\infty \frac{ \tilde{p}_{i,j}(n) }{ q^{f_{i,j} \bigl( \frac{n+C_{i,j}}{B_{i,j}} \bigr)} } + s_{i,j}
= \sum_{n=0}^\infty \frac{ q^{D_{i,j}} \cdot \tilde{p}_{i,j}(n) }{ q^{g_i(n)} } +s_{i,j},
\end{equation*}
where $s_{i,j} \in \Q(q)$ is some constant.
Let $d$ be the minimum of $D_{i,j}$ $\bigl( (i,j) \in \cU \bigr)$.
Also, for each $i=1,2,\dots,m$, let $K_i$ be the least common multiple of $B_{i,j}$ $(j=1,2,\dots,\ell_i)$.
Then,
\begin{equation*}
s_0 + q^d \sum_{i=1}^m \sum_{r=0}^{K_i-1} \sum_{n=0}^\infty \frac{ \displaystyle \sum_{j=1}^{\ell_i} 
b_{i,j} q^{D_{i,j}-d} \cdot \tilde{p}_{i,j}(K_i n + r) }{ q^{g_i(K_i n + r)} } = 0,
\end{equation*}
where $s_0 \= b_0 + \sum_{(i,j) \in \cU} b_{i,j} s_{i,j} \in \Q(q)$.
Note that $K_i/B_{i,j}$ $\bigl( (i,j) \in \cU \bigr)$ are positive integers. 
For each $(i,j) \in \cU$, since 
\begin{equation}
\tilde{p}_{i,j}(K_i n + r) =
\begin{dcases*}
P_{i,j} \left( \frac{K_i}{B_{i,j}} n + \frac{r+C_{i,j}}{B_{i,j}} \right) & if $\ r \equiv -C_{i,j} \pmod{B_{i,j}}$, \\
0 & otherwise,
\end{dcases*} \label{4.1.b_i,j}
\end{equation}
we see that for each $i=1,2,\dots,m$ and $r=0,1,\dots,K_i-1$, 
\begin{equation}
\sum_{j=1}^{\ell_i} b_{i,j} q^{D_{i,j}-d} \cdot \tilde{p}_{i,j}(K_i n + r) \label{4.1.poly}
\end{equation}
is a polynomial of $n$ with algebraic integer coefficients in $\Q(q)$.
Now, we define
\begin{equation*}
\cV \= \{ (i,r) \in \Z^2 \mid 1\le i \le m, \ 0 \le r \le K_i-1,
\text{and \eqref{4.1.poly} is a non-zero polynomial of $n$} \}.
\end{equation*}
We show $\cV = \emptyset$.

Suppose to the contrary $\cV \neq \emptyset$. 
For each $(i,r) \in \cV$, let $h_{i,r} (x) \= g_i (K_i x + r)$. Then, 
\begin{equation}
s_0 + q^d \sum_{(i,r) \in \cV} \sum_{n=0}^\infty \frac{ \displaystyle \sum_{j=1}^{\ell_i} 
b_{i,j} q^{D_{i,j}-d} \cdot \tilde{p}_{i,j}(K_i n + r) }{ q^{h_{i,r}(n)} } = 0. \label{4.1.sup}
\end{equation}
We apply Theorem \ref{thm1}. By the definition of $\cV$, we obtain
\begin{equation*}
\liminf_{n \to \infty} \left| \sum_{j=1}^{\ell_i} b_{i,j} q^{D_{i,j}-d} \cdot \tilde{p}_{i,j}(K_i n + r) \right| 
\neq 0 \quad \bigl( (i,r) \in \cV \bigr).
\end{equation*}
By \eqref{4.1.b_i,j}, if $(i,r) \in \cV$, there exists an integer $j_0$ $(1\le j_0 \le \ell_i)$ such that 
$(r+C_{i,j_0})/B_{i,j_0}$ is an integer, and hence
\begin{equation*}
h_{i,r}(x) \equiv f_{i,j_0} \left( \frac{K_i}{B_{i,j_0}} x + \frac{r+C_{i,j_0}}{B_{i,j_0}} \right) + D_{i,j_0}
\end{equation*}
is an integer-valued polynomial of degree $\ge2$ with a positive leading coefficient.
Also, for each $i=1,2,\dots,m$, we obtain
\begin{equation}
h_{i,r_1}(x+A) \not\equiv h_{i,r_2}(Bx+C) + D \label{4.1.F_r}
\end{equation}
for any integers $0 \le r_1,\ r_2 \le K_i-1$ $(r_1\neq r_2)$, $A$, $C$, $D$ 
and any positive rational number $B$.
Indeed, suppose to the contrary that there exist integers
$i$ $(1\le i \le m)$, $r_1$, $r_2$ $(0 \le r_1,\ r_2 \le K_i-1,$ $r_1\neq r_2)$, $A$, $C$, $D$ 
and a positive rational number $B$ such that
\begin{equation*}
h_{i,r_1}(x+A) \equiv h_{i,r_2}(Bx+C) + D.
\end{equation*}
Namely, 
\begin{equation*}
g_i(K_i x + AK_i+r_1) \equiv g_i(BK_i x + CK_i+r_2) + D.
\end{equation*}
Then, we get $B=1$, $AK_i+r_1 = CK_i+r_2$ and $D=0$.
In particular, we get $r_1-r_2=K_i(C-A)$, and this contradicts 
$0 \le r_1,\ r_2 \le K_i-1$ and $r_1\neq r_2$.
Thus, we see that \eqref{4.1.F_r} holds.
In addition, we have
\begin{equation*}
g_{i_1}(x) \not\sim g_{i_2}(x) \quad (1 \le i_1 < i_2 \le m).
\end{equation*}
Hence, we obtain
\begin{equation*}
h_{i_1,r_1}(x+A) \not\equiv h_{i_2,r_2}(Bx+C) + D
\end{equation*}
for any $(i_1,r_1) \neq (i_2,r_2)$, any integers $A$, $C$, $D$ and any positive rational number $B$.
Therefore, by Theorem \ref{thm1}, the numbers
\begin{equation*}
1, \qquad \sum_{n=0}^\infty \frac{ \displaystyle \sum_{j=1}^{\ell_i} 
b_{i,j} q^{D_{i,j}-d} \cdot \tilde{p}_{i,j}(K_i n + r)}{ q^{h_{i,r}(n)} } \quad \bigl( (i,r) \in \cV \bigr)
\end{equation*}
are linearly independent over $\Q(q)$, and this contradicts \eqref{4.1.sup}.
Thus $\cV = \emptyset$, and hence for each $i=1,2,\dots,m$, we obtain
\begin{equation*}
\sum_{j=1}^{\ell_i} b_{i,j} q^{D_{i,j}-d} \cdot \tilde{p}_{i,j}(K_i n+r) = 0
\end{equation*}
for every integers $0 \le r \le K_i-1$ and $n \ge 0$. 
Therefore, if the numbers \eqref{1.2.ind} in Theorem \ref{thm2} are linearly dependent over $\Q(q)$, 
then, for each $i=1,2,\dots,m$, the sequences 
$\{ \tilde{p}_{i,j}(n) \}_{n \ge 0}$ $(j=1,2,\dots,\ell_i)$ are linearly dependent over $\Q(q)$.
Conversely, assuming for each $i=1,2,\dots,m$ that 
$\{ \tilde{p}_{i,j}(n) \}_{n \ge 0}$ $(j=1,2,\dots,\ell_i)$ are linearly dependent over $\Q(q)$,
then we can show by \eqref{4.1.sup} that 
the numbers \eqref{1.2.ind} in Theorem \ref{thm2} are linearly dependent over $\Q(q)$.
The proof of Theorem \ref{thm2} is completed.
\end{proof}


\section{Examples}\label{sec5}

\subsection{Examples of Theorem \ref{thm1}}

First, we give some examples of the set of polynomials
which satisfy the condition \eqref{i} in Theorem \ref{thm1}.
We give the following lemma to prove Example \ref{exm2} in Section \ref{sec1}.

\begin{lemma}\label{lem2}
Let $g(x)$ be any integer-valued polynomial of degree $d \ge 1$ with a positive leading coefficient. Let
\begin{equation}
f_{i,j} (x) \= i g(x)^j + h_{i,j}(x) 
\quad \bigl(i=1,2,\dots,\ell_1,\ j=1,2,\dots,\ell_2 \text{ with } j \ge \frac{2}{d} \bigr), \label{5.1.poly}
\end{equation}
where $h_{i,j} (x)$ $(i=1,2,\dots,\ell_1,\ j=1,2,\dots,\ell_2 \text{ with } j \ge 2/d)$ 
are any integer-valued polynomials of degree $\le dj-2$.
Then, the polynomials \eqref{5.1.poly} satisfy the condition \eqref{i} in Theorem \ref{thm1}.
\end{lemma}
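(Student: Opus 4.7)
The plan is to verify condition~\eqref{i} of Theorem~\ref{thm1} directly for an arbitrary non-empty $\cS\subseteq\{(i,j):1\le i\le\ell_1,\ 1\le j\le\ell_2,\ j\ge 2/d\}$. I would first reduce to a top-degree sub-problem by setting $j^*\=\max\{j:(i,j)\in\cS\}$ and $I^*\=\{i:(i,j^*)\in\cS\}$. Any $(i',j')\in\cS$ with $j'<j^*$ satisfies $\deg f_{i',j'}=dj'<dj^*=\deg f_{i^*,j^*}$, so a purported identity $f_{i^*,j^*}(x+A)\equiv f_{i',j'}(Bx+C)+D$ is impossible on degree grounds; only $i'\in I^*\setminus\{i^*\}$ need attention.

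Writing $g(x)=cx^d+c_{d-1}x^{d-1}+\cdots$, the degree bound $\deg h_{i,j}\le dj-2$ means the top two coefficients of $f_{i,j^*}$ come from $i\,g^{j^*}$ alone. Matching the $x^{dj^*}$-coefficient forces $B^{dj^*}=i^*/i'$, so I would introduce an equivalence $i\sim i'$ on $I^*$ defined by ``$i/i'$ is the $(dj^*)$-th power of a positive rational''; for $i'\not\sim i^*$ no rational $B$ exists and that case is free. The $x^{dj^*-1}$-coefficient match then yields, by a direct binomial expansion,
\begin{equation*}
C\;=\;BA+(B-1)\,\frac{c_{d-1}}{cd}.
\end{equation*}
The key observation is that if this forced $C$ is non-integral, no permissible $(B,C,D)$ can satisfy the identity \emph{regardless} of the arbitrary polynomials $h_{i,j}$ (which only enter in lower-order coefficients). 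The whole problem thus collapses to a single choice of the integer $A$.

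I would pick $i^*$ to be the minimum element of its $\sim$-class in $I^*$. Writing $i^*=k\,m_*^{dj^*}$ and $i'=k\,m'^{dj^*}$ with common ``core'' $k$ and $m'>m_*\ge 1$, the forced $B=m_*/m'$ lies in $(0,1)$, hence is never an integer. Putting $B=m/n$ and $c_{d-1}/(cd)=P/Q$ in lowest terms, a short manipulation turns ``$C\in\Z$'' into the simultaneous conditions $\gcd(n,Q)=1$, $Q\mid m-n$, and $n\mid QA+P$; so for each offending $i'$ the bad integers $A$ form either the empty set or a single residue class modulo some $n_{i'}\ge 2$, with $n_{i'}$ automatically coprime to $Q$. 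Letting $L\=\operatorname{lcm}$ of these (finitely many) $n_{i'}$, we have $\gcd(L,Q)=1$, so the map $A\mapsto QA+P$ is a bijection on $\Z/L\Z$ and I can choose $A$ with $\gcd(QA+P,L)=1$. For such $A$, no offending $n_{i'}$ divides $QA+P$, hence $C\notin\Z$ for every $i'\sim i^*$ with $i'\ne i^*$, and condition~\eqref{i} is verified with this $(i^*,j^*)$ and $A$.

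The main obstacle is the last step — producing a single integer $A$ that forces the non-integrality of $C$ for every offending $i'$ simultaneously, which requires the clean translation of ``$C\in\Z$'' into a divisibility condition on $QA+P$ together with the coprimality $\gcd(L,Q)=1$. The rest is bookkeeping: the degree and leading-coefficient comparisons eliminate the easy cases, and the reduction ``$C\notin\Z\Rightarrow$ no identity'' removes any need to analyse the freely-given polynomials $h_{i,j}$.
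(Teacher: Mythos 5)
Your proposal is correct, and the reduction to the top two coefficients is exactly the paper's: in both arguments the polynomials $h_{i,j}$ are irrelevant because $\deg h_{i,j}\le dj-2$, the leading coefficients force $B^{dj}=i/i'$ with $0<B<1$ once the reference index is chosen minimal, and the next coefficient forces $C=BA+(B-1)c_{d-1}/(cd)$, so everything hinges on making this $C$ non-integral. Where you genuinely diverge is in the choice of $A$. The paper fixes once and for all $A_0\coloneqq-\lfloor c_{d-1}/(cd)\rfloor-1$ and observes that then
\begin{equation*}
C=-\left\lfloor \frac{c_{d-1}}{cd}\right\rfloor-1+(1-B)\left(1-\left\{\frac{c_{d-1}}{cd}\right\}\right),
\end{equation*}
which lies strictly between two consecutive integers for \emph{every} $B\in(0,1)$; this kills all competitors simultaneously with no arithmetic on $B$, and in particular never needs to know whether $i/i'$ is a perfect $(dj)$-th power (non-existence of a rational $B$ is just absorbed as a trivial subcase). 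Your route instead classifies, for each offending $i'$, the bad integers $A$ as a residue class $n_{i'}\mid QA+P$ with $n_{i'}\ge2$ coprime to $Q$, and then avoids them all by choosing $A$ with $QA+P\equiv1\pmod{\operatorname{lcm} n_{i'}}$; I checked the divisibility bookkeeping ($Q\mid m-n$ forces $\gcd(n,Q)=\gcd(s,n)=1$ for $m-n=Qs$, whence $nQ\mid mQA+(m-n)P$ reduces to $n\mid QA+P$) and it is sound, as is the preliminary reduction via $j^*=\max$ and degree comparison, which mirrors the paper's $i_1=\min$ plus forced $j_1=j_2$. So both proofs work; the paper's buys a one-line explicit $A$ and no case split over rationality of $(i/i')^{1/(dj)}$, while yours buys a more mechanical, CRT-style criterion that makes visible exactly which shifts $A$ fail for each competitor.
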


\begin{proof}
Let 
\begin{equation*}
A_0 \= -\left\lfloor \frac{u}{td} \right\rfloor -1,
\end{equation*}
where $t>0$ and $u$ are coefficients of $x^d$ and $x^{d-1}$ in $g(x)$, respectively.
Let $\cS$ be any non-empty subset of 
$\{ (i,j) \mid i=1,2,\dots,\ell_1,\ j=1,2,\dots,\ell_2 \text{ with } j \ge 2/d \}$
and $(i_1, j_1) \in \cS$ be an element satisfying $i_1=\min \{i \mid (i,j) \in \cS \}$.
Suppose to the contrary that
there exist $(i_2,j_2) \in \cS \setminus \{ (i_1,j_1) \}$, a positive rational number $B$
and integers $C$, $D$ such that
\begin{equation*}
f_{i_1,j_1} (x+A_0) \equiv f_{i_2,j_2} (Bx+C)+D.
\end{equation*}
Namely,
\begin{equation*}
i_1g(x+A_0)^{j_1} + h_{i_1,j_1}(x+A_0) \equiv i_2 g(Bx+C)^{j_2} + h_{i_2,j_2}(Bx+C) + D.
\end{equation*}
Then, we get $j_1=j_2$, and hence $i_1<i_2$.
By comparing the coefficients of $x^{d j_1}$ and $x^{d j_1-1}$, we obtain
\begin{numcases}{}
i_1 t^{j_1} = i_2 (t B^d)^{j_1}, \label{5.1.coe1} \\
i_1 j_1 t^{j_1-1} (td A_0+u) = i_2 j_1 (t B^d)^{j_1-1} B^{d-1} (tdC + u), \label{5.1.coe2}
\end{numcases}
respectively.
By \eqref{5.1.coe1}, we obtain $i_1=i_2 (B^d)^{j_1}$. Since $i_1<i_2$, we get $0<B<1$. 
In addition, substituting in \eqref{5.1.coe2}, we obtain $B(tdA_0+u) = tdC+u$.
Then,
\begin{align*}
C &= BA_0 - (1-B) \frac{u}{td} \\
&= B \left( -\left\lfloor \frac{u}{td} \right\rfloor -1 \right) 
    - (1-B) \left( \left\lfloor \frac{u}{td} \right\rfloor + \left\{ \frac{u}{td} \right\} \right) \\
&= - \left\lfloor \frac{u}{td} \right\rfloor -1 + (1-B) \left( 1- \left\{ \frac{u}{td} \right\} \right),
\end{align*}
where $\{x\} \= x - \lfloor x \rfloor$ denotes the fractional part of $x$.
Note that 
\begin{equation*}
0 < (1-B) \left( 1- \left\{ \frac{u}{td} \right\} \right) <1.
\end{equation*}
This is a contradiction since $C$ is an integer.
Thus, we obtain
\begin{equation*}
f_{i_1,j_1} (x+A_0) \not\equiv f_{i_2,j_2} (Bx+C)+D
\end{equation*}
for any $(i_2,j_2) \in \cS \setminus \{ (i_1,j_1) \}$, any positive rational number $B$
and any integers $C$, $D$.
Therefore, the proof of Lemma \ref{lem2} is completed.
\end{proof}


By Corollary \ref{cor1} in Section \ref{sec1}, we get the following corollary.

\begin{cor}\label{cor3}
Let $q$ be a Pisot or Salem number.
Let $f_{i,j}(x)$ $(i=1,2,\dots,\ell_1, \ j=1,2,\dots,\ell_2$ with $j \ge 2/d)$
be polynomials defined by \eqref{5.1.poly} in Lemma \ref{lem2}.
Then, for any sequences $\{a_{i,j} (n)\}_{n\ge1}$ 
$(i\!=1,2,\dots,\ell_1, \ j\!=1,2,\dots,\ell_2 \text{ with } j \ge 2/d)$
of non-zero integers with $\log(1+|a_{i,j} (n)|)=o(n)$, the numbers
\begin{equation*}
1,\qquad \sum_{n=1}^{\infty} \frac{a_{i,j} (n)}{q^{f_{i,j} (n)}} 
\quad \bigl( i=1,2,\dots,\ell_1,\ j=1,2,\dots,\ell_2 \text{ with } j \ge \frac{2}{d} \bigr)
\end{equation*}
are linearly independent over $\Q(q)$.
\end{cor}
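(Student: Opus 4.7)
The plan is to invoke Corollary \ref{cor1} after verifying that the family $\{f_{i,j}\}$ meets all of its hypotheses. First I would check the basic regularity of the polynomials: since $g$ has degree $d \ge 1$ with positive leading coefficient $t$, the polynomial $f_{i,j}(x) = i g(x)^j + h_{i,j}(x)$ has degree exactly $dj$ (because $\deg h_{i,j} \le dj-2 < dj$) and leading coefficient $i t^j > 0$. Because $j \ge 2/d$ we have $dj \ge 2$, so the degree-$\ge 2$ requirement of Corollary \ref{cor1} is met. Integer-valuedness of $f_{i,j}$ follows from that of $g$ and $h_{i,j}$, since $g(n)^j$ is an integer whenever $g(n)$ is.

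Next, Lemma \ref{lem2} provides precisely the remaining ingredient: it asserts that the family $\{f_{i,j}\}$ satisfies the condition \eqref{i} of Theorem \ref{thm1}. With both the basic regularity and condition \eqref{i} in hand, Corollary \ref{cor1} applies directly (after reindexing the pairs $(i,j)$ as a single linear index) to the family $\{f_{i,j}\}$ together with the given sequences $\{a_{i,j}(n)\}$ of non-zero integers satisfying $\log(1+|a_{i,j}(n)|) = o(n)$. This yields the linear independence over $\Q(q)$ of
\[
1, \qquad \sum_{n=1}^{\infty} \frac{a_{i,j}(n)}{q^{f_{i,j}(n)}} \qquad \bigl(i=1,\dots,\ell_1,\ j=1,\dots,\ell_2,\ j \ge 2/d\bigr),
\]
which is the desired conclusion.

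There is essentially no obstacle at this stage: the substance of Corollary \ref{cor3} lies in Lemma \ref{lem2}, whose combinatorial verification of condition \eqref{i} was already carried out, and in Corollary \ref{cor1}, which encapsulates the analytic linear-independence content. The only thing to monitor is the bookkeeping needed to pass from the two-index labeling $(i,j)$ used here to the single-index labeling used in the statement of Corollary \ref{cor1}, which is purely cosmetic.
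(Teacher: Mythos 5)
Your proposal is correct and follows exactly the paper's route: the paper derives Corollary \ref{cor3} by citing Lemma \ref{lem2} for condition \eqref{i} and then applying Corollary \ref{cor1}, which is precisely what you do (with the additional, harmless care of spelling out degree, positivity of the leading coefficient, and integer-valuedness).
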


In Lemma \ref{lem2}, if $g(x) \equiv x$, $h_{i,j}(x) \equiv 0$ and $\ell_1=\ell_2=\ell$, 
we get Example \ref{exm2} in Section \ref{sec1}.
Also in Lemma \ref{lem2}, if $g(x) \equiv x(x+1)$ or $x^2$, 
$h_{i,j}(x) \equiv 0$, $\ell_1 = \ell$ and $\ell_2 = 1$, we get linear independence results
for the values of the Jacobi theta constants defined by \eqref{1.1.Jacobi} in Section \ref{sec1}.

\begin{example}\label{exm4}
Let $q$ be a Pisot or Salem number. Then, for any integer $\ell \ge 1$, the numbers
\begin{equation*}
1, \qquad q^{\frac{i}{4}} \th_2 (1/q^i) \quad (i=1,2,\dots,\ell)
\end{equation*}
are linearly independent over $\Q(q)$. So are the numbers
\begin{equation*}
1, \qquad \th_3 (1/q^i) \quad (i=1,2,\dots,\ell),
\end{equation*}
and the numbers
\begin{equation*}
1, \qquad \th_4 (1/q^i) \quad (i=1,2,\dots,\ell).
\end{equation*}
\end{example}

Note that the numbers $1$, $q^{i/4}\th_2 (1/q^i)$, $\th_3 (1/q^i)$ $(i=1,2,\dots,\ell)$
are linearly dependent over $\Q(q)$ if $\ell \ge 4$.
For example, $\th_3(1/q) = \th_2(1/q^4) + \th_3(1/q^4)$ since
\begin{equation*}
1+ 2\sum_{n=1}^{\infty} \frac{1}{q^{n^2}} = 2\sum_{n=0}^{\infty} \frac{1}{q^{(2n+1)^2}} +
\left(1+ 2\sum_{n=1}^{\infty} \frac{1}{q^{(2n)^2}} \right).
\end{equation*}
Also, it is known that for any integer $\ell \ge1$ and any algebraic number $\a$ $(0<|\a|<1)$,
any two numbers among the numbers $\th_m (\a^i)$ $(m=2,3,4, \ i=1,2,\dots,\ell)$ 
are algebraically independent over $\Q$, 
while any three are not (cf. \cite{Tachiya1, Tachiya2, Tachiya3}).


\subsection{An example of Theorem \ref{thm2}}

\begin{example}\label{exm5}
Let $q$ be a Pisot or Salem number.
Let $P_1(x)$, $P_2(x)$ and $P_3(x)$ 
be non-zero polynomials with algebraic integer coefficients in $\Q(q)$.
Then, the four numbers
\begin{equation*}
1, \quad \sum_{n=1}^\infty \frac{P_1(n)}{q^{n^2}},
\quad \sum_{n=1}^\infty \frac{P_2(n)}{q^{(2n-1)^2}}, 
\quad \sum_{n=1}^\infty \frac{P_3(n)}{q^{(2n)^2}}
\end{equation*}
are linearly dependent over $\Q(q)$ if and only if 
there exist algebraic numbers $c_1 \neq 0$ and $c_2 \neq 0$ in $\Q(q)$ such that
\begin{equation}
P_2(x) \equiv c_1 P_1(2x-1), \quad P_3(x) \equiv c_2 P_1(2x). \label{5.2.con}
\end{equation}
\end{example}

\begin{proof}
We apply the criterion given by Theorem \ref{thm2}.
Let $f_1(x) \= x^2$, $f_2(x) \= (2x-1)^2$, $f_3(x) \= (2x)^2$ and $g(x) \= x^2$. 
Then, we get $f_1 \sim f_2 \sim f_3$ and 
\begin{equation*}
g(x) \equiv f_1(x) \equiv f_2 \Bigl( \frac{x+1}{2} \Bigr) \equiv f_3 \Bigl( \frac{x}{2} \Bigr).
\end{equation*}
Let $\{ \tilde{p}_j (n) \}_{n \ge 0}$ $(j=1,2,3)$ be sequences such that
\begin{align*}
\tilde{p}_1(n) &\= P_1(n) \quad \text{for every integer $n$,} \\
\tilde{p}_2(n) &\=
\begin{dcases*}
P_2 \Bigl( \frac{n+1}{2} \Bigr) & if $n$ is odd, \\
0 & if $n$ is even, \\ 
\end{dcases*} \\
\tilde{p}_3(n) &\=
\begin{dcases*}
P_3 \Bigl( \frac{n}{2} \Bigr) & if $n$ is even, \\
0 & if $n$ is odd.
\end{dcases*}
\end{align*}
Assume that there exist algebraic numbers $k_1$, $k_2$ and $k_3$ in $\Q(q)$ such that
\begin{equation*}
k_1 \cdot \tilde{p}_1(n) + k_2 \cdot \tilde{p}_2(n) + k_3 \cdot \tilde{p}_3(n) = 0
\end{equation*}
for every integer $n \ge 0$.
Then, we obtain 
\begin{align*}
\begin{dcases*}
k_1 \cdot P_1(n) + k_2 \cdot P_2 \Bigl( \frac{n+1}{2} \Bigr) = 0 & if $n$ is odd, \\
k_1 \cdot P_1(n) + k_3 \cdot P_3 \Bigl( \frac{n}{2} \Bigr) = 0 & if $n$ is even. 
\end{dcases*}
\end{align*}
Thus, we see that the sequences $\{ \tilde{p}_j (n) \}_{n \ge 0}$ $(j=1,2,3)$
are linearly dependent over $\Q(q)$ if and only if the condition \eqref{5.2.con} holds.
Therefore, by Theorem \ref{thm2}, the proof of Example \ref{exm5} is completed.
\end{proof}


\noindent
{\bf Acknowledgments}. 
I would like to express my sincere gratitude to Y. Tachiya for his comments and suggestions. 
I am also very grateful to the referee for valuable comments and for careful reading of my manuscript.


\end{document}